\titleformat{\section}[block]
 {\bfseries}
 {\thesection.}
 {\fontdimen2\font}
 {}
\newtheorem{theorem}{Theorem}[section]
\newtheorem{lemma}[theorem]{Lemma}
\newtheorem{corollary}[theorem]{Corollary}
\newtheorem{proposition}[theorem]{Proposition}
\theoremstyle{definition}
\newtheorem{remark}[theorem]{Remark}
\newtheorem{question}{Question}
\newtheorem{example}[theorem]{Example}
\DeclareMathOperator{\N}{\mathbb{N}}
\DeclareMathOperator{\uhr}{\upharpoonright} 
\DeclareMathOperator{\sel}{\mathnormal{se}\mkern-1.5mu\ell}
\DeclareMathOperator{\cl}{cl} 
\setlist{noitemsep}
\renewcommand{\emptyset}{\varnothing}
\numberwithin{equation}{section}
\begin{document}

\author{Valentin Gutev}

\address{Department of Mathematics, Faculty of Science, University of
   Malta, Msida MSD 2080, Malta}
\email{valentin.gutev@um.edu.mt}

\subjclass[2010]{54B20, 54C65, 54D30, 54F05}

\keywords{Vietoris topology, separately continuous weak selection,
  coarser topology, weakly orderable space, quasi-king space,
  pseudocompact space}

\title{Coarser Compact Topologies}

\begin{abstract}
  It is introduced the concept of a quasi-king space, which is a
  natural generalisation of a king space. In the realm of suborderable
  spaces, king spaces are precisely the compact spaces, so are the
  quasi-king spaces. In contrast, quasi-king spaces are more flexible
  in handling coarser selection topologies. The main purpose of this
  paper is to show that a weakly orderable space is quasi-king if and
  only if all of its coarser selection topologies are compact.
\end{abstract}

\date{\today}
%\date{} 
\maketitle

\section{Introduction}

All spaces are Hausdorff topological spaces. For a set $X$, let
\[
\mathscr{F}_2(X)=\{S\subset X: 1\leq |S| \leq 2\}.
\]
A map $\sigma:\mathscr{F}_2(X)\to X$ is a \emph{weak selection} for
$X$ if $\sigma(S)\in S$ for every $S\in \mathscr{F}_2(X)$.  Every weak
selection $\sigma$ for $X$ generates an order-like relation
$\leq_\sigma$ on $X$ defined by $x\leq_\sigma y$ if
$\sigma(\{x,y\})=x$ \cite[Definition 7.1]{michael:51}; and we write
$x<_\sigma y$ if $x\leq_\sigma y$ and $x\neq y$.  The relation
$\leq_\sigma$ is similar to a linear order being both total and
antisymmetric, but is not necessarily transitive.  If $X$ is a
topological space, then $\sigma$ is \emph{continuous} if it is
continuous with respect to the Vietoris topology on
$\mathscr{F}_2(X)$. This can be expressed only in terms of
$\leq_\sigma$ by the property that for every $x,y\in X$ with
$x<_\sigma y$, there are open sets $U,V\subset X$ such that $x\in U$,
$y\in V$ and $s<_\sigma t$ for every $s\in U$ and $t\in V$, see
\cite[Theorem 3.1]{gutev-nogura:01a}. \medskip

In 1921, studying dominance hierarchy in chickens and other birds,
Thorleif Schjelderup-Ebbe coined the term ``pecking order''.
Subsequently, in 1951, H. G. Landau \cite{MR0041412} (see also
\cite{MR567954}) used this `order' to show that any finite flock of
chickens has a most dominant one, called a \emph{king}. Landau's
mathematical model was based on Graph Theory and became known as ``The
King Chicken Theorem''. The pecking order is rarely linear, in fact it
is equivalent to the existence of a weak selection $\sigma$ on the
flock $X$. In this interpretation, an element $q\in X$ is called a
\emph{$\sigma$-king} if for every $x\in X$ there exists $y\in X$ with
$x\leq_\sigma y\leq_\sigma q$. Thus, Landau actually showed that each
weak selection $\sigma$ on a finite set $X$ has a
$\sigma$-king. Extending Landau's result to the setting of topological
spaces, Nagao and Shakhmatov called a space $X$ to be a \emph{king
  space} \cite{MR2944781} if $X$ has a continuous weak selection, and
every continuous weak selection $\sigma$ for $X$ has a
$\sigma$-king. Next, they showed that every compact space with a
continuous weak selection is a king space \cite[Theorem
2.3]{MR2944781}. In the inverse direction, Nagao and Shakhmatov showed
that each linearly ordered king space is compact (\cite[Corollary
3.3]{MR2944781}); also that each king space which is either
pseudocompact, or zero-dimensional, or locally connected, is compact
as well (\cite[Theorem 3.5]{MR2944781}). Subsequently, answering a
question of \cite{MR2944781}, it was shown in \cite[Theorem
4.1]{MR3640030} that each locally pseudocompact king space is also
compact.\medskip

On the other hand, there are simple examples of connected king spaces
which are not compact. For instance, such a space is the topological
sine curve
\[
  X=\{(0,0)\}\cup \{(t,\sin 1/t): 0<t\leq 1\}.
\]
However, $X$ has a coarser topology --- that of the interval $[0,1]$,
which is compact and admits the same compatible pecking orders (i.e.\
the same continuous weak selections). In this paper, we address such
spaces, and study the compactness of coarser topologies induced by
weak selections. To state our main result, we briefly recall some
related terminology.  Any weak selection $\sigma$ for $X$ generates a
natural topology $\mathscr{T}_{\sigma}$ on $X$
\cite{gutev-nogura:01a}, called a \emph{selection topology} and
defined following the pattern of the open interval topology, see
Section \ref{sec:selection-topologies}. If $X$ is a space and $\sigma$
is continuous, then $\mathscr{T}_\sigma$ is a coarser topology on $X$,
but $\sigma$ is not necessarily continuous with respect to
$\mathscr{T}_\sigma$ \cite{gutev-nogura:01a} (see also
\cite{gutev-nogura:03b,gutev-tomita:07}). A weak selection $\sigma$
for a space $X$ is called \emph{properly continuous} if
$\mathscr{T}_\sigma$ is a coarser topology on $X$ and $\sigma$ is
continuous with respect to $\mathscr{T}_\sigma$ \cite[Definition
4.4]{gutev-nogura:09a}. Thus, every properly continuous weak selection
is continuous, but the converse is not necessarily true. For a weak
selection $\sigma$ for $X$, we will say that a point $q\in X$ is a
\emph{quasi $\sigma$-king} if for each $x\in X$ there are finitely
many points $y_1,\dots,y_n\in X$ with
$x\leq_\sigma y_1\leq_\sigma\dots \leq_\sigma y_n\leq q$. Finally, we
shall say that $X$ is a \emph{quasi-king space} if $X$ has a weak
selection $\sigma$ such that $\mathscr{T}_\sigma$ is a coarser
topology on $X$, and each such weak selection $\sigma$ has a quasi
$\sigma$-king. The following theorem will be proved in this paper.

\begin{theorem}
  \label{theorem-KS-Coarser-v11:1}
  Let $X$ be a quasi-king space with a properly continuous weak
  selection. Then each coarser selection topology on $X$ is compact. 
\end{theorem}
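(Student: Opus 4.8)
I would prove (the essential content of) the theorem in contrapositive form: assuming $X$ has a properly continuous weak selection but \emph{some} coarser selection topology on $X$ is not compact, I would produce a weak selection $\rho$ for $X$ such that $\mathscr{T}_\rho$ is a coarser topology on $X$ yet $\rho$ has no quasi $\rho$-king; this contradicts $X$ being a quasi-king space, since by definition every weak selection with coarser selection topology must have a quasi-king. So I fix a weak selection $\tau$ with $\mathscr{T}_\tau\subseteq\mathscr{T}_X$ for which $(X,\mathscr{T}_\tau)$ is not compact, and set out to build $\rho$.

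The first step is to linearise. The role of the properly continuous weak selection is precisely to guarantee that the (possibly intransitive) weak selection $\tau$ can be traded for a genuine linear order: I would invoke the structure theory of (properly) continuous weak selections developed in the earlier sections to obtain a linear order $\sqsubseteq$ on $X$ whose order topology $\mathscr{T}_\sqsubseteq$ satisfies $\mathscr{T}_\tau\subseteq\mathscr{T}_\sqsubseteq\subseteq\mathscr{T}_X$; since $\mathscr{T}_\tau$ is not compact and $\mathscr{T}_\sqsubseteq$ is finer than $\mathscr{T}_\tau$, the linearly ordered space $(X,\mathscr{T}_\sqsubseteq)$ is not compact either. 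Now the quasi-king hypothesis enters: the weak selections $\min_\sqsubseteq$ and $\max_\sqsubseteq$ both have selection topology $\mathscr{T}_\sqsubseteq$, hence coarser, so each has a quasi-king; because $\sqsubseteq$ is transitive, the finite chains in the definition of quasi-king collapse and these quasi-kings are forced to be the $\sqsubseteq$-largest and $\sqsubseteq$-smallest element of $X$. Thus $(X,\sqsubseteq)$ has both a maximum and a minimum.

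A linearly ordered space with a largest and a smallest element is compact exactly when it is Dedekind complete, so $(X,\mathscr{T}_\sqsubseteq)$ possesses a gap: a partition $X=L\cup U$ with $L\ne\emptyset\ne U$, with $x\sqsubset y$ whenever $x\in L$ and $y\in U$, and with $L$ having no largest and $U$ no smallest element. One checks that $L=\bigcup_{y\in U}\{x:x\sqsubset y\}$ and $U=\bigcup_{x\in L}\{y:x\sqsubset y\}$, so $L$ and $U$ are $\mathscr{T}_\sqsubseteq$-clopen. I now define $\rho$ to be the weak selection associated with the linear order $\sqsubseteq'$ obtained from $\sqsubseteq$ by moving the block $U$ \emph{below} the block $L$ (keeping $\sqsubseteq$ within each block). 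A routine case analysis of the four types of $\sqsubseteq'$-rays, using that $L$ and $U$ are $\mathscr{T}_\sqsubseteq$-clopen, shows every $\sqsubseteq'$-ray is $\mathscr{T}_\sqsubseteq$-open, hence $\mathscr{T}_\rho=\mathscr{T}_{\sqsubseteq'}\subseteq\mathscr{T}_\sqsubseteq\subseteq\mathscr{T}_X$, so $\mathscr{T}_\rho$ is a coarser topology on $X$. But the top block of $\sqsubseteq'$ is $L$, which has no $\sqsubseteq$-largest element, so $\sqsubseteq'$ has no largest element; since $\leq_\rho=\sqsubseteq'$ is transitive, $\rho$ has no quasi $\rho$-king. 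This contradicts that $X$ is a quasi-king space, and the theorem follows.

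The crux, and the step I expect to be the main obstacle, is the linearisation in the second paragraph: extracting from an arbitrary weak selection with coarser selection topology an honest linear order whose topology is still coarser (and comparable, so that non-compactness is preserved). This is exactly where ``properly continuous'' is indispensable --- for a general continuous weak selection the selection topology need not carry a compatible linear order, and the ``cut-and-restack at a gap'' construction would have nothing to act on. The remaining, order-theoretic part --- that a non-compact linearly ordered space with endpoints can always be ``de-maximised'' inside its own topology by swapping the two sides of a gap, thereby destroying every quasi-king --- is the conceptual heart of the proof, and care is needed there to verify that the re-ordered topology really is contained in $\mathscr{T}_X$.
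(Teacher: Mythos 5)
Your closing two paragraphs --- forcing endpoints out of the quasi-king hypothesis applied to $\min_\sqsubseteq$ and $\max_\sqsubseteq$, locating a gap $(L,U)$ in a non-compact linearly ordered space with endpoints, and restacking the two clopen halves so that the new order has no maximum and hence no quasi-king --- are essentially sound (though your displayed formulas for $L$ and $U$ as unions of rays are not quite right; one should write $L=\bigcup_{x\in L}(\gets,x)_\sqsubseteq$ using that $L$ has no maximum, and dually for $U$). This part is close in spirit to what the paper does in Proposition \ref{proposition-KS-Coarser-v4:2} and Lemma \ref{lemma-KS-Coarser-v16:1}, where the same effect is obtained via the reverse selection $\sigma^*$ and Proposition \ref{proposition-KS-Coarser-v10:1}. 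The genuine gap is the step you yourself call the crux: the ``linearisation''. For an arbitrary separately continuous weak selection $\tau$ you assert the existence of a linear order $\sqsubseteq$ with $\mathscr{T}_\tau\subset\mathscr{T}_\sqsubseteq\subset\mathscr{T}_X$, attributing it to ``the structure theory of properly continuous weak selections''. There is no such off-the-shelf result; this assertion is essentially the theorem itself. Even the weaker claim that $X$ is weakly orderable is, in this paper, an \emph{output} of the main compactness argument (Corollary \ref{corollary-KS-Coarser-v11:1}, which rests on Theorem \ref{theorem-KS-Coarser-v10:1} and the pseudocompactness analysis of the decomposition space of components), not an input to it. Moreover your claim is strictly stronger than weak orderability: you need the order topology to lie \emph{above} $\mathscr{T}_\tau$, and nothing guarantees such comparability --- distinct selection topologies on the same space are in general incomparable, and a posteriori (once the theorem is known, so that both topologies are compact Hausdorff) the sandwiching $\mathscr{T}_\tau\subset\mathscr{T}_\sqsubseteq$ would force $\mathscr{T}_\tau=\mathscr{T}_\sqsubseteq$, i.e.\ that $\mathscr{T}_\tau$ is itself an orderable topology, which is again essentially the conclusion you are trying to prove.

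The paper has to work around exactly this obstruction: no single linear order controls all coarser selection topologies at once. It first shows that the selection-orderable space $(X,\mathscr{T}_\varphi)$ attached to the properly continuous selection $\varphi$ is compact (Theorem \ref{theorem-KS-Coarser-v10:1}), by proving that the decomposition space of components is zero-dimensional, carries a continuous weak selection, and is pseudocompact, hence sequentially compact, and then that $X$ itself is pseudocompact, hence suborderable, hence compact; van Mill--Wattel then yields weak orderability. For an arbitrary separately continuous $\sigma$ one does \emph{not} compare $\mathscr{T}_\sigma$ with the order topology; instead one proves that the $\mathscr{T}_\sigma$-components are compact and have clopen bases, refines any $\mathscr{T}_\sigma$-open cover by a clopen cover, and applies the clopen compactness of weakly orderable quasi-king spaces (Theorems \ref{theorem-KS-Coarser-v1:1} and \ref{theorem-KS-Coarser-v3:1}). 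Your gap-swapping idea survives as a useful local tool inside that scheme, but as written your argument is missing its main ingredient.
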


Regarding the proper place of Theorem \ref{theorem-KS-Coarser-v11:1},
let us remark that a quasi-king space is a relaxed version of a
king space allowing dominance in several intermediate steps. Using
this, in Section \ref{sec:quasi-king-spaces} we give a simple direct
proof that each weak selection $\sigma$ on a set $X$, which generates a
compact selection topology $\mathscr{T}_\sigma$, has a quasi
$\sigma$-king (Theorem \ref{theorem-KS-Coarser-v4:1}). In the same
section, we also give an example of a space $X$ with a continuous weak
selection $\sigma$ which admits a quasi $\sigma$-king, but has no
$\sigma$-king (Example \ref{example-KS-Coarser-v9:2}). On the other
hand, all mentioned results for king spaces remain valid for
quasi-king spaces. Namely, in Section \ref{sec:clop-comp-spac} we show
that each suborderable quasi-king space is compact (Proposition
\ref{proposition-KS-Coarser-v4:2}), which is an element in the proof
of Theorem \ref{theorem-KS-Coarser-v11:1}.  This brings the following
natural question.

\begin{question}
  \label{question-KS-Coarser-v6:1}
  Does there exist a quasi-king space which is not a king space?
\end{question}

Theorem \ref{theorem-KS-Coarser-v11:1} also gives a partial solution
to a problem in the theory of continuous weak selections. Briefly, a
space is called \emph{weakly orderable} if it has a coarser orderable
topology, see Section~\ref{sec:selection-topologies}. Ernest Michael
showed that each connected space with a continuous weak selection is
weakly orderable \cite[Lemma 7.2]{michael:51}. Subsequently, Jan van
Mill and Evert Wattel showed that in the realm of compact spaces,
Michael's result remains valid without connectedness, namely that each
compact space with a continuous weak selection is (weakly) orderable
\cite[Theorem 1.1]{mill-wattel:81}. This led them to pose the question
whether a space with a continuous weak selection is weakly orderable;
the question itself became known as the \emph{weak orderability
  problem}. In 2009, Michael Hru{\v s}{\'a}k and Iv{\'a}n
Mart{\'\i}nez-Ruiz gave a counterexample by constructing a separable,
first countable and locally compact space which admits a continuous
weak selection but is not weakly orderable \cite[Theorem
2.7]{hrusak-martinez:09}; the interested reader is also referred to
\cite{gutev-nogura:09a} where the construction was discussed in
detail. However, this counterexample is a special Isbell-Mr\'{o}wka
space which is not normal. Thus, the weak orderability problem still
remains open in the realm of normal spaces, see \cite[Question
5]{gutev-nogura:09a}.  Another special case of this problem was
proposed in \cite{gutev-nogura:09a}, it is based on the fact that each
weakly orderable space has a properly continuous weak selection
\cite[Corollary 4.5]{gutev-nogura:09a}. Namely, the following question
was raised in \cite[Question 3]{gutev-nogura:09a}, also in
\cite[Problem 4.31]{gutev-2013springer}.

\begin{question}
  \label{question-KS-Coarser-v11:2}
  Let $X$ be a space which has a properly continuous weak
  selection. Then, is it true that $X$ is weakly orderable?
\end{question}

An essential element in the proof of Theorem
\ref{theorem-KS-Coarser-v11:1} is that in the realm of quasi-king
spaces, the answer to Question \ref{question-KS-Coarser-v11:2} is in
the affirmative, see Corollary
\ref{corollary-KS-Coarser-v11:1}.\medskip

The paper is organised as follows. In the next section, we give a
brief account on various orderable-like spaces. The idea of quasi-king
spaces is discussed in Section \ref{sec:quasi-king-spaces}. In Section
\ref{sec:clop-comp-spac}, we show a special case of Theorem
\ref{theorem-KS-Coarser-v11:1} that each clopen cover of a weakly
orderable quasi-king space has a finite subcover (Theorem
\ref{theorem-KS-Coarser-v1:1}). This is used further in Section
\ref{sec:coars-comp-select} to show that each coarser selection
topology on a weakly orderable quasi-king space is compact (Theorem
\ref{theorem-KS-Coarser-v3:1}). The proof of Theorem
\ref{theorem-KS-Coarser-v11:1} is finally accomplished in Section
\ref{sec:coars-pseud-topol} by showing that for a quasi-king space,
the selection topology induced by any properly continuous weak
selection is pseudocompact, hence compact as well (Theorem
\ref{theorem-KS-Coarser-v10:1}).

\section{Selection Topologies}
\label{sec:selection-topologies}

Let $\sigma$ be a weak selection for $X$, and $\leq_\sigma$ be the
order-like relation generated by $\sigma$, see the Introduction. For
subsets $A,B\subset X$, we write that $A\leq_\sigma B$ ($A<_\sigma B$)
if $x\leq_\sigma y$ (respectively, $x<_\sigma y$) for every $x\in A$
and $y\in B$. For a singleton $A=\{x\}$, we will simply write
$x\leq_\sigma B$ or $x<_\sigma B$ instead of $\{x\}\leq_\sigma B$ or
$\{x\}<_\sigma B$; in the same way, we write $A\leq_\sigma y$ or
$A<_\sigma y$ for a singleton $B=\{y\}$. Finally, we will use the
standard notation for the intervals generated by $\leq_\sigma$. For
instance, $(\leftarrow, x)_{\leq_\sigma}$ will stand for all $y\in X$
with $y<_\sigma x$; $(\leftarrow, x]_{\leq_\sigma}$ for that of all
$y\in X$ with $y\leq_\sigma x$; the $\leq_\sigma$-intervals
$(x,\to)_{\leq_\sigma}$ and $[x,\to)_{\leq_\sigma}$ are similarly
defined. However, working with such intervals should be done with
caution keeping in mind that the relation $\leq_\sigma$ is not
necessarily transitive. \medskip

Each weak selection $\sigma$ for $X$ generates a natural topology
$\mathscr{T}_{\sigma}$ on $X$, called a \emph{selection topology}
\cite{gutev-nogura:01a, gutev-nogura:03b}. It is patterned after the
open interval topology by taking the collection
$\big\{(\gets,x)_{\leq_\sigma}, (x,\to)_{\leq_\sigma}: x\in X\big\}$
as a subbase for $\mathscr{T}_\sigma$. Thus,
$\mathscr{T}_{\sigma}=\mathscr{T}_{\leq_\sigma}$ is the usual open
interval topology, whenever $\leq_\sigma$ is a linear order on
$X$. Each selection topology $\mathscr{T}_\sigma$ is Tychonoff
\cite[Theorem 2.7]{hrusak-martinez:09b}.  On the other hand,
$\mathscr{T}_\sigma$ may lack several of the other strong properties
of the open interval topology,
see~\cite{garcia-tomita:08,gutev-tomita:07}.\medskip

If $\sigma$ is a continuous weak selection for a topological space
$(X,\mathscr{T})$, then $\mathscr{T}_\sigma\subset \mathscr{T}$. The
converse is not true, and the inclusion $\mathscr{T}_\sigma\subset
\mathscr{T}$ does not imply continuity of $\sigma$ even in the realm
of compact spaces, see \cite[Example
1.21]{artico-marconi-pelant-rotter-tkachenko:02}, \cite[Example
3.6]{gutev-nogura:01a} and \cite[Example 4.3]{gutev-nogura:09a}. In
particular, a continuous weak selection $\sigma$ is not necessarily
continuous with respect to $\mathscr{T}_\sigma$. Based on this, a weak
selection $\sigma$ for a space $(X,\mathscr{T})$ was called

\begin{enumerate}[label=\upshape{(\roman*)}]
\item \emph{separately continuous} if $\mathscr{T}_\sigma\subset
  \mathscr{T}$ \cite{artico-marconi-pelant-rotter-tkachenko:02,
    gutev-nogura:09a}; and
\item \emph{properly continuous} if $\mathscr{T}_\sigma\subset
  \mathscr{T}$ and $\sigma$ is continuous with respect to
  $\mathscr{T}_\sigma$~\cite{gutev-nogura:09a}.
\end{enumerate}
Thus, each properly continuous weak selection is continuous, and each
continuous one is separately continuous, but none of these
implications is reversible. \medskip

In what follows, for a weak selection $\sigma$ for $X$, we will write
$\sigma\uhr Z$ to denote the restriction of $\sigma$ on a subset
$Z\subset X$, i.e.\ $\sigma\uhr Z=\sigma\uhr
\mathscr{F}_2(Z)$. Similarly, for a topology $\mathscr{T}$ on $X$, we
will use $\mathscr{T}\uhr Z$ for the subspace topology on $Z$.  The
following properties are evident from the definitions, and are left to
the reader.

\begin{proposition}
  \label{proposition-KS-Coarser-v11:1}
  Let $\sigma$ be a weak selection for $X$. Then
  \begin{enumerate}[label=\upshape{(\roman*)}]
  \item $\mathscr{T}_{\sigma\uhr Z}\subset \mathscr{T}_\sigma\uhr Z$,
    whenever $Z\subset X$\textup{;}
  \item $\sigma$ is separately continuous with respect to
    $\mathscr{T}_\sigma$\textup{;}
  \item $\sigma$ is continuous with respect to $\mathscr{T}_\sigma$,
    whenever $\leq_\sigma$ is a linear order on $X$.
  \end{enumerate}
\end{proposition}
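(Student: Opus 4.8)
The plan is to verify the three items directly from the definitions of the selection topology and of separate/proper continuity recalled above; everything reduces to routine manipulation of the subbasic intervals, the only mildly delicate point being the use of transitivity in item (iii).

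For (i), recall that $\mathscr{T}_{\sigma\uhr Z}$ has the subbase $\big\{(\gets,x)_{\leq_{\sigma\uhr Z}},(x,\to)_{\leq_{\sigma\uhr Z}}:x\in Z\big\}$, and that the order-like relation generated by $\sigma\uhr Z$ is precisely the restriction of $\leq_\sigma$ to $Z$. Hence for every $x\in Z$ one has $(\gets,x)_{\leq_{\sigma\uhr Z}}=(\gets,x)_{\leq_\sigma}\cap Z$ and $(x,\to)_{\leq_{\sigma\uhr Z}}=(x,\to)_{\leq_\sigma}\cap Z$, so each subbasic set of $\mathscr{T}_{\sigma\uhr Z}$ is the trace on $Z$ of a subbasic set of $\mathscr{T}_\sigma$, and therefore belongs to $\mathscr{T}_\sigma\uhr Z$. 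Since $\mathscr{T}_\sigma\uhr Z$ is a topology, it contains the topology generated by these sets, which is exactly $\mathscr{T}_{\sigma\uhr Z}$.

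Item (ii) is immediate: by definition, $\sigma$ is separately continuous with respect to a topology $\mathscr{T}$ on $X$ precisely when $\mathscr{T}_\sigma\subset\mathscr{T}$, and here $\mathscr{T}=\mathscr{T}_\sigma$, so $\mathscr{T}_\sigma\subset\mathscr{T}_\sigma$ holds trivially.

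For (iii), assume $\leq_\sigma$ is a linear order, so $\mathscr{T}_\sigma=\mathscr{T}_{\leq_\sigma}$ is the open interval topology. To see that $\sigma$ is continuous with respect to $\mathscr{T}_\sigma$ it suffices, by the characterisation recalled in the Introduction (\cite[Theorem 3.1]{gutev-nogura:01a}), to produce for arbitrary $x<_\sigma y$ sets $U,V\in\mathscr{T}_\sigma$ with $x\in U$, $y\in V$ and $U<_\sigma V$. If there is $z\in X$ with $x<_\sigma z<_\sigma y$, take $U=(\gets,z)_{\leq_\sigma}$ and $V=(z,\to)_{\leq_\sigma}$; transitivity of $\leq_\sigma$ gives $s<_\sigma z<_\sigma t$, hence $s<_\sigma t$, for all $s\in U$, $t\in V$. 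Otherwise take $U=(\gets,y)_{\leq_\sigma}$ and $V=(x,\to)_{\leq_\sigma}$; if $s\in U$ and $t\in V$ then, by totality, $s<_\sigma t$ unless $t\leq_\sigma s$, and in the latter case $x<_\sigma t\leq_\sigma s<_\sigma y$ would furnish (via transitivity) a point strictly between $x$ and $y$, a contradiction. Thus $U<_\sigma V$ in either case. The only thing to be careful about throughout is the warning that $\leq_\sigma$ need not be transitive in general — which is precisely why the transitivity hypothesis is invoked only in (iii) — and this constitutes the single nontrivial step in an otherwise purely definitional verification.
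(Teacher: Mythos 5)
Your verification is correct and is exactly the routine definitional argument the paper has in mind; the paper itself gives no proof, stating only that these properties "are evident from the definitions, and are left to the reader." Your case analysis in (iii), distinguishing whether the pair $x<_\sigma y$ admits an intermediate point or forms a jump, is the standard way to fill in that detail and uses transitivity precisely where it is needed.
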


For a topology $\mathscr{T}$ on $X$, we will use the prefix
``$\mathscr{T}$-'' to express properties of subsets of $X$ with
respect to this topology.  If $\sigma$ is a continuous selection for a
connected space $X$, then $\leq_\sigma$ is a linear order on $X$ and
$\mathscr{T}_\sigma$ is a coarser topology on $X$ \cite[Lemma
7.2]{michael:51}, which gives that $X$ is weakly orderable with
respect to $\leq_\sigma$. The property remains valid for separately
continuous weak selections, and will play an important role in the
paper.

\begin{proposition}
  \label{proposition-KS-Coarser-v9:1}
  Let $\sigma$ be a weak selection for $X$ and $Z\subset X$ be a
  $\mathscr{T}_\sigma$-connected subset of $X$. Then
  \begin{enumerate}[label=\upshape{(\roman*)}]
  \item\label{item:4} $x\notin Z$\ \ if and only if\ \ $x<_\sigma Z$
    or $Z<_\sigma x$\textup{;}
  \item\label{item:6} $\mathscr{T}_{\sigma\uhr Z}=\mathscr{T}_\sigma\uhr Z$ is
    the subspace topology on $Z$\textup{;}
  \item\label{item:5} $\leq_\sigma$ is a linear order on $Z$.
  \end{enumerate}
  In particular, $\sigma\uhr Z$ is a continuous weak selection for
  $(Z,\mathscr{T}_{\sigma\uhr Z})$.
\end{proposition}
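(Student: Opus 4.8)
The plan is to prove the three items in turn and then read the final assertion off from \ref{item:6} and \ref{item:5}; throughout I would use only that $\leq_\sigma$ is total and antisymmetric and that each ray $(\gets,x)_{\leq_\sigma}$ and $(x,\to)_{\leq_\sigma}$ is $\mathscr{T}_\sigma$-open. For \ref{item:4} the backward implication is immediate, since $x\in Z$ together with $x<_\sigma Z$ or $Z<_\sigma x$ would force the impossible relation $x<_\sigma x$. For the forward implication, assuming $x\notin Z$ I would set $A=Z\cap(\gets,x)_{\leq_\sigma}$ and $B=Z\cap(x,\to)_{\leq_\sigma}$; totality gives $Z=A\cup B$, antisymmetry gives $A\cap B=\emptyset$, and both sets are open in $\mathscr{T}_\sigma\uhr Z$, so $\mathscr{T}_\sigma$-connectedness of $Z$ forces $A=\emptyset$ (that is, $x<_\sigma Z$) or $B=\emptyset$ (that is, $Z<_\sigma x$).

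For \ref{item:6}, the inclusion $\mathscr{T}_{\sigma\uhr Z}\subset\mathscr{T}_\sigma\uhr Z$ is Proposition \ref{proposition-KS-Coarser-v11:1}(i), so I only need the reverse inclusion, and for that it suffices to show that every subbasic member $Z\cap(\gets,x)_{\leq_\sigma}$ and $Z\cap(x,\to)_{\leq_\sigma}$ of $\mathscr{T}_\sigma\uhr Z$ is $\mathscr{T}_{\sigma\uhr Z}$-open. If $x\in Z$, such a set is literally a subbasic open ray of $\mathscr{T}_{\sigma\uhr Z}$; if $x\notin Z$, then by \ref{item:4} we have $x<_\sigma Z$ or $Z<_\sigma x$, whence each of the two sets in question equals either $\emptyset$ or $Z$. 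Hence the two topologies agree, and by definition the common topology is the subspace topology on $Z$.

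The one step needing real care is \ref{item:5}. As $\leq_\sigma\uhr Z$ is total and antisymmetric, only transitivity is at issue, so I would argue by contradiction: a failure of transitivity yields $a,b,c\in Z$ with $a\leq_\sigma b$, $b\leq_\sigma c$, $a\not\leq_\sigma c$, and checking the degenerate cases $a=b$ and $b=c$ upgrades this to a genuine cycle $a<_\sigma b<_\sigma c<_\sigma a$. The idea is that such a cycle forces $Z$ to split into several nonempty relatively open pieces. Concretely, I would look at the three ``cyclic'' intervals $N_a=\{z\in Z:c<_\sigma z<_\sigma b\}$, $N_b=\{z\in Z:a<_\sigma z<_\sigma c\}$, $N_c=\{z\in Z:b<_\sigma z<_\sigma a\}$ and the two ``extremal'' sets $A_0=\{z\in Z:\{a,b,c\}<_\sigma z\}$, $B_0=\{z\in Z:z<_\sigma\{a,b,c\}\}$, each of which is open in $\mathscr{T}_\sigma\uhr Z$, being the trace on $Z$ of a finite intersection of subbasic $\mathscr{T}_\sigma$-open rays. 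The crux is then the combinatorial claim that these five sets are pairwise disjoint and cover $Z$: this is a routine case analysis sorting each $z\in Z\setminus\{a,b,c\}$ according to its $<_\sigma$-position relative to each of $a,b,c$ (eight sign patterns, each landing in exactly one of the five sets), while $a\in N_a$, $b\in N_b$, $c\in N_c$ shows $N_a$ and $N_b$ are nonempty. Thus $Z$ would be the disjoint union of the nonempty relatively open sets $N_a$ and $N_b\cup N_c\cup A_0\cup B_0$, contradicting connectedness. Hence $\leq_\sigma\uhr Z$ is transitive, so a linear order.

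Finally, for the ``in particular'' clause I would combine \ref{item:6} (which identifies $\mathscr{T}_{\sigma\uhr Z}$ with the subspace topology on $Z$) and \ref{item:5} with Proposition \ref{proposition-KS-Coarser-v11:1}(iii): since $\leq_{\sigma\uhr Z}=\leq_\sigma\uhr Z$ is a linear order on $Z$, the weak selection $\sigma\uhr Z$ is continuous with respect to $\mathscr{T}_{\sigma\uhr Z}$. The main obstacle I anticipate is not deep — it is simply the bookkeeping inside \ref{item:5}, namely verifying that the three cyclic intervals and the two extremal sets genuinely partition $Z$; everything else is an unwinding of definitions together with a single appeal to connectedness.
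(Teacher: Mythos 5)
Your proof is correct. The difference from the paper is that the paper does not argue at all here: it disposes of (i) and (ii) by citing Propositions 2.4 and 2.5 of the author's earlier paper \emph{Selection topologies} (reference \cite{MR3430989}), of (iii) by citing Proposition 2.2 of \cite{gutev-nogura:01a}, and of the final clause by Proposition \ref{proposition-KS-Coarser-v11:1} --- exactly as you do for that last step. So what you have produced is a self-contained substitute for those citations, and all three arguments check out: the two-ray decomposition $Z=\bigl(Z\cap(\gets,x)_{\leq_\sigma}\bigr)\cup\bigl(Z\cap(x,\to)_{\leq_\sigma}\bigr)$ for (i), the reduction of (ii) to subbasic rays with the $x\notin Z$ case handled by (i), and for (iii) the five-set partition into the cyclic intervals $N_a,N_b,N_c$ and the extremal sets $A_0,B_0$; I verified that each of the eight sign patterns of a point $z$ relative to $a,b,c$ lands in exactly one of the five sets, so the partition claim is sound and the split $Z=N_a\sqcup(N_b\cup N_c\cup A_0\cup B_0)$ into nonempty relatively open pieces does contradict connectedness. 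Two cosmetic remarks: in (iii) the case $a=c$ should also be mentioned (it is ruled out instantly by reflexivity of $\leq_\sigma$, since $a\not\leq_\sigma c$), and a one-line simpler route to (iii) is available --- given the cycle $a<_\sigma b<_\sigma c<_\sigma a$, apply your own item (i) to the connected set $Z'=Z$ is not possible, but one can instead just split $Z$ by the two sets $Z\cap(\gets,b)_{\leq_\sigma}\cap(c,\to)_{\leq_\sigma}$ and its complement as you do; your version is already essentially minimal. Nothing is missing.
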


\begin{proof}
  The property in \ref{item:4} is \cite[Proposition 2.4]{MR3430989},
  while \ref{item:6} is \cite[Proposition 2.5]{MR3430989}. The
  property \ref{item:5} is \cite[Proposition
  2.2]{gutev-nogura:01a}. The second part now follows from Proposition
  \ref{proposition-KS-Coarser-v11:1}, see also \cite[Proposition
  1.22]{artico-marconi-pelant-rotter-tkachenko:02}.
\end{proof}

Let $\mathscr{D}$ be a partition of $X$ and $\gamma$ be a weak
selection for $\mathscr{D}$. Following the idea of lexicographical
sums of linear orders, to each collection of weak selections
$\eta_\Delta:\mathscr{F}_2(\Delta)\to \Delta$, for
$\Delta\in \mathscr{D}$, we will associate the weak selection $\sigma$
for $X$ defined by
\begin{equation}
  \label{eq:KS-Coarser-v12:2}
  \begin{cases}
    \sigma\uhr \Delta=\eta_\Delta, &\text{for every $\Delta\in
  \mathscr{D}$,}\\
  \Gamma<_\sigma \Delta, &\text{whenever $\Gamma,\Delta\in
    \mathscr{D}$ with $\Gamma<_\gamma \Delta$.}
\end{cases}
\end{equation}
We will refer to $\sigma$ as the \emph{lexicographical $\gamma$-sum}
of $\eta_\Delta$, $\Delta\in \mathscr{D}$, or simply as the
\emph{lexicographical sum}, and will denote it by
$\sigma=\sum_{(\gamma,\Delta\in \mathscr{D})}\eta_\Delta$. In case
$\eta_\Delta=\eta\uhr \Delta$, $\Delta\in \mathscr{D}$, for some weak
selection $\eta$ for $X$, the lexicographical sum
$\sum_{(\gamma,\Delta\in \mathscr{D})} \eta_\Delta$ was used in
\cite{MR3430989} under the name of a
\emph{$(\mathscr{D},\gamma)$-clone} of $\eta$.

\begin{proposition}
  \label{proposition-KS-Coarser-v12:2}
  Let $\mathscr{D}$ be an open partition of a space $X$, $\gamma$ be a
  weak selection for $\mathscr{D}$, and $\eta_\Delta$ be a
  separately continuous weak selection for $\Delta$, for each
  ${\Delta\in \mathscr{D}}$. Then the lexicographical $\gamma$-sum
  $\sigma=\sum_{(\gamma,\Delta\in \mathscr{D})}\eta_\Delta$ is a
  separately continuous weak selection for $X$. Moreover, $\sigma$ is
  continuous provided so is each $\eta_\Delta$, $\Delta\in
  \mathscr{D}$.
\end{proposition}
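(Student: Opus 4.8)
The plan is to verify both assertions straight from the definition \eqref{eq:KS-Coarser-v12:2}, using the interval subbase of $\mathscr{T}_\sigma$ for separate continuity and the characterisation of continuous weak selections from \cite[Theorem 3.1]{gutev-nogura:01a} for the second part. First I would record that \eqref{eq:KS-Coarser-v12:2} indeed determines a weak selection for $X$: on each $\Delta\in\mathscr{D}$ it agrees with $\eta_\Delta$, while for $x\in\Gamma$ and $y\in\Delta$ with $\Gamma\neq\Delta$ the value $\sigma(\{x,y\})$ is forced because $\leq_\gamma$ is total, so exactly one of $\Gamma<_\gamma\Delta$ and $\Delta<_\gamma\Gamma$ holds; in particular, for distinct $\Gamma,\Delta\in\mathscr{D}$ one has $\Gamma<_\sigma\Delta$ if and only if $\Gamma<_\gamma\Delta$.

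For separate continuity, the key step is an interval computation. Fixing $x\in X$ and letting $\Delta_0\in\mathscr{D}$ be the member containing $x$, one unwinds $\leq_\sigma$ to see that $y<_\sigma x$ holds exactly when either $y\in\Delta_0$ with $y<_{\eta_{\Delta_0}}x$, or $y$ lies in some $\Gamma\in\mathscr{D}$ with $\Gamma<_\gamma\Delta_0$; hence
\[
  (\gets,x)_{\leq_\sigma}=(\gets,x)_{\leq_{\eta_{\Delta_0}}}\cup\bigcup\{\Gamma\in\mathscr{D}:\Gamma<_\gamma\Delta_0\}.
\]
The first set on the right is $\mathscr{T}_{\eta_{\Delta_0}}$-open, hence open in $\Delta_0$ since $\eta_{\Delta_0}$ is separately continuous, hence open in $X$ because $\Delta_0$ is open; the second is a union of members of $\mathscr{D}$, each of which is open in $X$. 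Thus $(\gets,x)_{\leq_\sigma}\in\mathscr{T}$, and symmetrically $(x,\to)_{\leq_\sigma}\in\mathscr{T}$. Since these intervals form a subbase for $\mathscr{T}_\sigma$, this gives $\mathscr{T}_\sigma\subset\mathscr{T}$, i.e.\ $\sigma$ is separately continuous.

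For the ``moreover'' part, assume in addition that each $\eta_\Delta$ is continuous, and apply \cite[Theorem 3.1]{gutev-nogura:01a}: given $x,y\in X$ with $x<_\sigma y$, I must produce open $U\ni x$ and $V\ni y$ with $s<_\sigma t$ whenever $s\in U$ and $t\in V$. The argument splits into two cases. If $x,y$ lie in a common $\Delta_0\in\mathscr{D}$, then $x<_{\eta_{\Delta_0}}y$, and continuity of $\eta_{\Delta_0}$ supplies $\mathscr{T}\uhr\Delta_0$-open (hence $\mathscr{T}$-open) sets $U\ni x$, $V\ni y$ with $U<_{\eta_{\Delta_0}}V$, so $U<_\sigma V$. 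Otherwise $x\in\Gamma$ and $y\in\Delta$ with $\Gamma\neq\Delta$; since $\leq_\gamma$ is total and $x<_\sigma y$, necessarily $\Gamma<_\gamma\Delta$, and then taking $U=\Gamma$ and $V=\Delta$ works, these being open in $X$ with $U=\Gamma<_\sigma\Delta=V$. In either case the required neighbourhoods exist, so $\sigma$ is continuous.

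The whole proof is essentially bookkeeping, and I do not expect a genuine obstacle; the only point deserving care is the interval identity in the second paragraph, where one must respect that $\leq_\sigma$ need not be transitive. It is worth emphasising that what makes the cross-cell case of continuity go through is precisely that $\mathscr{D}$ is an \emph{open} partition, so the cells themselves serve as the witnessing neighbourhoods — consequently no continuity (or any other) hypothesis on $\gamma$ is needed anywhere in the argument.
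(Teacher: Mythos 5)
Your proposal is correct and follows essentially the same route as the paper: the same interval identity $(\gets,x)_{\leq_\sigma}=(\gets,x)_{\leq_{\eta_{\Delta}}}\cup\bigcup_{\Gamma<_\gamma\Delta}\Gamma$ for separate continuity, and the same two-case argument (same cell versus distinct cells, using the cells themselves as neighbourhoods) for the continuity claim. No issues.
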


\begin{proof}
  Let $\Delta\in \mathscr{D}$ and $x\in \Delta$. According to
  (\ref{eq:KS-Coarser-v12:2}), we have that 
  \[
  (\gets,x)_{\leq_\sigma}= (\gets,x)_{\leq_{\eta_{\Delta}}}\cup
  \bigcup_{\Gamma<_\gamma \Delta} \Gamma.
  \]
  Hence, $(\gets,x)_{\leq_\sigma}$ is open in $X$ because
  $\eta_{\Delta}$ is separately continuous and $\mathscr{D}$ consists
  of open sets.  Similarly, $(x,\to)_{\leq_\sigma}$ is also
  open. Thus, $\sigma$ is separately continuous.\smallskip

  Suppose that each $\eta_\Delta$, $\Delta\in \mathscr{D}$, is
  continuous. To show that $\sigma$ is also continuous, take
  $p,q\in X$ with $p<_\sigma q$. It now suffices to find open sets
  $U,V\subset X$ such that $p\in U$, $q\in V$ and $U<_\sigma V$. To
  this end, let $\Delta_p,\Delta_q\in \mathscr{D}$ be the unique
  elements with $p\in \Delta_p$ and $q\in \Delta_q$. If
  $\Delta_p\neq \Delta_q$, then by (\ref{eq:KS-Coarser-v12:2}),
  $\Delta_p<_\sigma \Delta_q$ and we can take $U=\Delta_p$ and
  $V=\Delta_q$ because $\mathscr{D}$ consists of open sets. If
  $\Delta_p=\Delta_q=\Delta$, we can use that
  $\sigma\uhr \Delta= \eta_\Delta$ is continuous to take open sets
  $U,V\subset \Delta$ such that $p\in U$, $q\in V$ and
  $U<_{\eta_\Delta} V$. Evidently, $U<_\sigma V$.
\end{proof}

\section{Quasi-King Spaces}
\label{sec:quasi-king-spaces}

Let $\sigma$ be a weak selection for $X$, and
$\ll_\sigma,\lll_\sigma\subset X^2$ be the binary relations defined
for $x,y\in X$ by
\begin{equation}
  \label{eq:KS-Coarser-v4:1}
  \left\{\begin{aligned}
  x\ll_\sigma y\quad&\text{if}\ x\leq_\sigma y_1\leq_\sigma y,\ \
  \text{for some $y_1\in X$,\quad and}\\
  x\lll_\sigma y\quad &\text{if}\ x\leq_\sigma
  y_1\leq_\sigma \dots\leq_\sigma y_n\leq_\sigma y,\ \
  \text{for some $y_1,\dots, y_n\in X$.} 
\end{aligned}\right.
\end{equation}
It is evident that $\leq_\sigma\subset\ll_\sigma\subset\lll_\sigma$,
and that $\ll_\sigma$ and $\lll_\sigma$ are total and reflexive
because so is $\leq_\sigma$.  Furthermore, $\lll_\sigma$ is always
transitive. However, in general, $\ll_\sigma$ and $\lll_\sigma$ are
not antisymmetric, and  may contain properly the relation
$\leq_\sigma$. In fact, $\leq_\sigma$ is equal to one of these
relations precisely when $\leq_\sigma$ is transitive (i.e.\ a linear
order), which is summarised in the proposition below.

\begin{proposition}
  \label{proposition-KS-Coarser-v9:2}
  Let $\sigma$ be a weak selection for $X$. Then
  $\lll_\sigma=\leq_\sigma$ if and only if $\ll_\sigma=\leq_\sigma$,
  which is in turn equivalent to $\leq_\sigma$ being transitive.
\end{proposition}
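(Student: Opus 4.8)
The plan is to prove the three conditions equivalent by running the cycle
\[
\lll_\sigma=\leq_\sigma \ \Longrightarrow\ \ll_\sigma=\leq_\sigma \ \Longrightarrow\ \leq_\sigma \text{ transitive}\ \Longrightarrow\ \lll_\sigma=\leq_\sigma,
\]
using nothing beyond the inclusions $\leq_\sigma\subset\ll_\sigma\subset\lll_\sigma$ already recorded and a one-line induction. Since every arrow is purely formal, I do not expect a genuine obstacle; the only subtlety is that $\leq_\sigma$ need not be transitive, so a chain of $\leq_\sigma$-steps appearing in (\ref{eq:KS-Coarser-v4:1}) cannot be shortened for free — this is precisely the place where the induction is required.

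For the first arrow I would simply observe that $\leq_\sigma\subset\ll_\sigma\subset\lll_\sigma=\leq_\sigma$ forces equality throughout, so $\ll_\sigma=\leq_\sigma$. For the second arrow, given $x\leq_\sigma y$ and $y\leq_\sigma z$, I would take $y_1=y$ in the defining clause of $\ll_\sigma$ in (\ref{eq:KS-Coarser-v4:1}) to obtain $x\ll_\sigma z$, whence $x\leq_\sigma z$ by hypothesis; since $\leq_\sigma$ is total and antisymmetric by construction, transitivity then makes it a linear order.

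For the closing arrow I would start from $x\lll_\sigma y$, unwind the definition to get $x\leq_\sigma y_1\leq_\sigma\dots\leq_\sigma y_n\leq_\sigma y$ for some $y_1,\dots,y_n\in X$, and then induct on $n$: transitivity collapses the pair $y_{n-1}\leq_\sigma y_n\leq_\sigma y$ to $y_{n-1}\leq_\sigma y$, shortening the chain, and iterating leaves only $x\leq_\sigma y$. This gives $\lll_\sigma\subset\leq_\sigma$, and the reverse inclusion is already known, so $\lll_\sigma=\leq_\sigma$ and the cycle closes, yielding all the claimed equivalences.
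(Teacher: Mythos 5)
Your proof is correct and follows essentially the same route as the paper: a cycle of implications resting on the inclusions $\leq_\sigma\subset\ll_\sigma\subset\lll_\sigma$. The only (harmless) differences are that you establish the implication from $\ll_\sigma=\leq_\sigma$ to transitivity directly by taking $y_1=y$, whereas the paper argues by contraposition via a three-cycle $x<_\sigma y<_\sigma z<_\sigma x$, and that you spell out the chain-shortening induction for the implication from transitivity to $\lll_\sigma=\leq_\sigma$, which the paper leaves implicit.
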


\begin{proof}
  Evidently, $\lll_\sigma= \leq_\sigma$ implies that
  $\ll_\sigma=\leq_\sigma$ because $\leq_\sigma\subset
  \ll_\sigma\subset \lll_\sigma$.  If $\leq_\sigma$ is not transitive,
  then $X$ contains points $x,y,z\in X$ with $x<_\sigma y<_\sigma
  z<_\sigma x$. In this case, $\ll_\sigma\neq \leq_\sigma$
  because $x<_\sigma y\ll_\sigma x$.
\end{proof}

Our interest in these binary relations is the interpretation that
$p\in X$ is a \emph{$\sigma$-king} if $x\ll_\sigma p$ for all
$x\in X$; and $p$ is a \emph{quasi $\sigma$-king} if $x\lll_\sigma p$
for all $x\in X$, see the Introduction. In other words, the
$\sigma$-kings of $X$ are the $\ll_\sigma$-maximal elements of $X$,
and the quasi $\sigma$-kings are the $\lll_\sigma$-maximal ones. We
proceed with some examples about the difference between $\sigma$-kings
and quasi $\sigma$-ones.

\begin{example}
  \label{example-KS-Coarser-v9:1}
  Let $X=\{a,b,c,p\}$ consist of four points, and $\gamma$ be the weak
  selection for $X$ defined by $a<_\gamma b<_\gamma c <_\gamma a$ and
  $c<_\gamma p<_\gamma \{a,b\}$. Graphically, $\leq_\gamma$ is
  represented by the diagram below, where
  ``$<_\gamma$''=``$\leftarrow$'' and the shortest chain
  $a\leftarrow \dots \leftarrow p$ of arrows illustrating the
  relation  $a\lll_\gamma p$ is emphasised.
\begin{center}
  \begin{tikzpicture}
  \filldraw (2,-1) circle [radius=1.5pt] node[right] {$b$} (-2,-1) circle
  [radius=1.5pt] node[left] {$a$}
  (0,2) circle [radius=1.5pt] node[above] {$c$};   
  \draw[gray,line width=1pt, <-] (-1.7,-1) -- (1.7,-1);
  \draw[gray,line width=1pt,<-] (1.9,-.7) -- (.2,1.8); 
  \draw[gray, line width=1pt,<-,dashed] (-.2,1.8) -- (-1.9,-.7); 
  \filldraw (0,.3) circle [radius=1.5pt] node[anchor=north] {$p$};
  \draw[gray,line width=1pt,<-] (0,1.8) -- (0,.5); 
  \draw[gray,line width=1pt,<-,dashed] (-.2,.2) -- (-1.8,-.85);
  \draw[gray, line width=1pt,<-,dashed] (.2,.2) -- (1.8,-.85); 
  \end{tikzpicture}
\end{center}    
Then $p$ is a quasi $\gamma$-king for $X$, but not a $\gamma$-king. On
the other hand, $a$, $b$ and $c$ are $\gamma$-kings for
$X$.\hfill\textsquare
\end{example}

In case of infinite spaces, we have the following similar example
where all points of $X$ are quasi $\sigma$-kings for some continuous
weak selection $\sigma$, but $X$ has no $\sigma$-king. 

\begin{example}
  \label{example-KS-Coarser-v9:2}
  Following Example \ref{example-KS-Coarser-v9:1}, let
  $X=\Delta_a\uplus \Delta_b\uplus \Delta_c$ be the topological sum of
  three copies $\Delta_a$, $\Delta_b$ and $\Delta_c$ of the interval
  $(0,1)$, and let $\gamma$ be the weak selection on open partition
  $\mathscr{D}=\big\{\Delta_{a},\Delta_{b},\Delta_{c}\big\}$ defined
  by
  $ \Delta_{a}<_\gamma \Delta_{b}<_\gamma \Delta_{c}<_\gamma
  \Delta_{a}$. Take the standard selection
  $\eta(\{x,y\})=\min\{x,y\}$, $x,y\in (0,1)$, on each one of the open
  segments $\Delta_a$, $\Delta_b$ and $\Delta_c$. Finally, let
  $\sigma$ be the lexicographical $\gamma$-sum of these selections. In
  other words, $\sigma$ is the weak selection for $X$ which is
  continuous on each of these open segments, and
  $\Delta_{a}<_\sigma \Delta_{b}<_\sigma \Delta_{c}<_\sigma
  \Delta_{a}$.  According to Proposition
  \ref{proposition-KS-Coarser-v12:2}, $\sigma$ is
  continuous. Moreover, each element of $X$ is a quasi $\sigma$-king,
  but $X$ has no $\sigma$-king because none of the open segments
  contains a last element with respect to
  $\leq_\sigma$.\hfill\textsquare
\end{example}

Regarding the existence of quasi $\sigma$-kings, we have the
following natural result  which is complementary to \cite[Theorem
2.3]{MR2944781}.

\begin{theorem}
\label{theorem-KS-Coarser-v4:1}
Let $\sigma$ be a weak selection for $X$ such that
$\mathscr{T}_\sigma$ is a compact topology on $X$. Then $X$ has a
quasi $\sigma$-king.
\end{theorem}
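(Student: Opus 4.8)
The plan is to exhibit a $\lll_\sigma$-maximal point of $X$ by a finite-intersection-property argument against $\mathscr{T}_\sigma$. Recall that $\lll_\sigma$ is total, reflexive and transitive. Since any total transitive relation on a finite set has a greatest element, every finite $F\subseteq X$ contains a point $k$ with $x\lll_\sigma k$ for all $x\in F$; writing $U_x=\{w\in X:x\lll_\sigma w\}$, this says $k\in\bigcap_{x\in F}U_x$. Hence $\{U_x:x\in X\}$ has the finite intersection property, and so does the family of closed sets $\{\cl_{\mathscr{T}_\sigma}U_x:x\in X\}$. By compactness of $\mathscr{T}_\sigma$ there is a point $q\in\bigcap_{x\in X}\cl_{\mathscr{T}_\sigma}U_x$, and the goal becomes to show that $q$ --- or some point canonically associated with it --- is a quasi $\sigma$-king.

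The key step is to analyse $A_x:=X\setminus U_x=\{w\in X:x\not\lll_\sigma w\}$ for those $x$ with $x\not\lll_\sigma q$ (i.e.\ $q\in A_x$). Transitivity of $\lll_\sigma$ makes $A_x$ downward closed for $\leq_\sigma$: if $w\in A_x$ and $w'\leq_\sigma w$ then $w'\in A_x$, since $x\lll_\sigma w'\leq_\sigma w$ would force $x\lll_\sigma w$. I first claim $A_x$ has a $\leq_\sigma$-maximum. Otherwise, by totality of $\leq_\sigma$ each $w\in A_x$ admits $w'\in A_x$ with $w<_\sigma w'$, and then $(\gets,w')_{\leq_\sigma}$ is a $\mathscr{T}_\sigma$-open neighbourhood of $w$ contained in $A_x$; thus $A_x$ is $\mathscr{T}_\sigma$-open, which is impossible because $q\in A_x$ while $q\in\cl_{\mathscr{T}_\sigma}U_x$ and $A_x\cap U_x=\emptyset$. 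So let $z=\max_{\leq_\sigma}A_x$; then $A_x=(\gets,z]_{\leq_\sigma}$ and $U_x=(z,\to)_{\leq_\sigma}$. Finally $q=z$: indeed $q\leq_\sigma z$, and $q<_\sigma z$ is impossible since then $(\gets,z)_{\leq_\sigma}\subseteq A_x$ would be a $\mathscr{T}_\sigma$-open neighbourhood of $q$ missing $U_x$. Thus for every $x$ with $x\not\lll_\sigma q$ we obtain the rigid identity $U_x=(q,\to)_{\leq_\sigma}$.

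To conclude, suppose $q$ is not a quasi $\sigma$-king and pick $x_0$ with $x_0\not\lll_\sigma q$, so $U_{x_0}=(q,\to)_{\leq_\sigma}$ by the previous paragraph; I claim $x_0$ is then a quasi $\sigma$-king. If not, choose $w$ with $w\not\lll_\sigma x_0$. Since $x_0\not\lll_\sigma q$ and $\lll_\sigma$ is total, $q\lll_\sigma x_0$, so if $w\lll_\sigma q$ held then $w\lll_\sigma x_0$ by transitivity, a contradiction; hence $w\not\lll_\sigma q$, i.e.\ $q\in A_w$. Applying the key step to $w$ gives $U_w=(q,\to)_{\leq_\sigma}=U_{x_0}$. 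But $x_0\in U_{x_0}$ by reflexivity of $\lll_\sigma$, so $x_0\in U_w$, i.e.\ $w\lll_\sigma x_0$, contradicting the choice of $w$. Therefore $X$ has a quasi $\sigma$-king (namely $q$, or else $x_0$).

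I expect the main obstacle to be exactly the middle paragraph. The upper sets $U_x$ need not be $\mathscr{T}_\sigma$-closed --- only the $\leq_\sigma$-intervals are subbasic open, hence their complements closed --- so one is forced to work with closures, and the real work is to show that $\cl_{\mathscr{T}_\sigma}U_x$ exceeds $U_x$ by at most a single ``endpoint'' and that this endpoint must be the chosen $q$. The dichotomy ``$A_x$ is $\mathscr{T}_\sigma$-open, or it has a $\leq_\sigma$-maximum'', together with the non-transitivity of $\leq_\sigma$ (which is precisely why $U_x$ can differ from a $\leq_\sigma$-interval), is what must be handled with care; once the identity $U_x=(q,\to)_{\leq_\sigma}$ is available, the final collision argument is short.
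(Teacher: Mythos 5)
Your proof is correct and follows essentially the same route as the paper's: both work with the up-sets $K_x=\{p\in X: x\lll_\sigma p\}$ (your $U_x$), establish the finite intersection property, and use compactness of $\mathscr{T}_\sigma$ to produce a point $q\in\bigcap_{x\in X}\cl_{\mathscr{T}_\sigma}(K_x)$ which is then either a quasi $\sigma$-king itself or lies strictly $\leq_\sigma$-below one. Your middle paragraph does more work than is needed: the paper simply observes that if $q<_\sigma q'$ then the open set $(\gets,q')_{\leq_\sigma}$ meets every $K_x$ at some $p_x$, whence $x\lll_\sigma p_x<_\sigma q'$ gives $x\lll_\sigma q'$ directly, without identifying $U_x$ as exactly $(q,\to)_{\leq_\sigma}$.
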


\begin{proof}
  For every $x\in X$, let
  \begin{equation}
    \label{eq:KS-Coarser-v9:3}
    K_{x}=\{p\in X: x\lll_\sigma p\}.
  \end{equation}
  Evidently, each $K_x$ is nonempty because $x\in K_x$.  Take
  $x,y\in X$ with $x\leq_\sigma y$, and $p\in K_y$. Then $p\in K_x$
  because $x\leq_\sigma y\lll_\sigma p$ implies $x\lll_\sigma p$, see
  \eqref{eq:KS-Coarser-v4:1}. Thus, every two elements of the
  collection $\{K_x:x\in X\}$ are comparable by inclusion. Hence,
  it has the finite intersection property.  Let
  $\cl_{\mathscr{T}_\sigma}(A)=\overline{A}^{\,\mathscr{T}_\sigma}$ be
  the closure of a subset $A\subset X$ in the topology
  $\mathscr{T}_\sigma$. Since $\mathscr{T}_\sigma$ is a compact
  topology, we get that
  $\bigcap_{x\in X}\cl_{\mathscr{T}_\sigma}(K_x)\neq\emptyset$. Let
  $p\in \bigcap_{x\in X}\cl_{\mathscr{T}_\sigma}(K_x)$. If
  $x\leq_\sigma p$ for every $x\in X$, then clearly $p$ is a
  $\sigma$-king for $X$. If $p<_\sigma q$ for some $q\in X$, then $q$
  is a quasi $\sigma$-king for $X$. Indeed, for every $x\in X$ there
  exists $p_x\in K_x$ with $p_x<_\sigma q$, because
  $p\in (\leftarrow,q)_{\leq_\sigma}\cap
  \cl_{\mathscr{T}_\sigma}(K_x)$. According to
  (\ref{eq:KS-Coarser-v4:1}) and (\ref{eq:KS-Coarser-v9:3}),
  $q\in K_x$ for every $x\in X$.
\end{proof}

Recall that a space $X$ is \emph{quasi-king} if it has a separately
continuous weak selection, and each separately continuous weak
selection $\sigma$ for $X$ has a quasi $\sigma$-king.  We now have the
following consequence, compare with \cite[Theorem 2.3]{MR2944781}.

\begin{corollary}
  \label{corollary-KS-Coarser-v4:2}
  Let $X$ be a space with a separately continuous weak selection. If
  each coarser selection topology on $X$ is compact, then $X$ is a
  quasi-king space.
\end{corollary}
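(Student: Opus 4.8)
The plan is to derive this directly from Theorem~\ref{theorem-KS-Coarser-v4:1} by unwinding the definitions, so the ``proof'' is really just bookkeeping. First I would note that the hypothesis already supplies half of what is needed: since $X$ carries a separately continuous weak selection by assumption, the existence clause in the definition of a quasi-king space holds automatically, and it only remains to verify that \emph{each} separately continuous weak selection $\sigma$ for $X$ admits a quasi $\sigma$-king.

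To that end I would fix an arbitrary separately continuous weak selection $\sigma$ for $X$. By the very definition of separate continuity, $\mathscr{T}_\sigma\subset\mathscr{T}$, which is exactly the statement that $\mathscr{T}_\sigma$ is one of the coarser selection topologies on $X$. Hence, by the hypothesis of the corollary, $\mathscr{T}_\sigma$ is a compact topology on $X$. Applying Theorem~\ref{theorem-KS-Coarser-v4:1} to $\sigma$ (with $X$ carrying the compact topology $\mathscr{T}_\sigma$) yields a quasi $\sigma$-king for $X$. Since $\sigma$ was an arbitrary separately continuous weak selection, both defining conditions of a quasi-king space are met, and the proof is complete.

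I do not anticipate any real obstacle: the statement is a formal consequence of Theorem~\ref{theorem-KS-Coarser-v4:1} together with the definitions recalled just above it. The only point that deserves a moment's attention is the terminological identification ``$\sigma$ is separately continuous'' $\iff$ ``$\mathscr{T}_\sigma$ is a coarser selection topology on $X$'', which is immediate from $\mathscr{T}_\sigma\subset\mathscr{T}$, and the observation that the hypothesis on coarser selection topologies is precisely what is needed to feed compactness of $\mathscr{T}_\sigma$ into that theorem.
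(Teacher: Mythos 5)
Your argument is correct and is exactly the reasoning the paper intends: the corollary is presented as an immediate consequence of Theorem~\ref{theorem-KS-Coarser-v4:1}, obtained by noting that for any separately continuous $\sigma$ the topology $\mathscr{T}_\sigma$ is a coarser selection topology, hence compact by hypothesis, hence admits a quasi $\sigma$-king. Nothing is missing.
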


We conclude with some remarks. 

\begin{remark}
  \label{remark-KS-Coarser-v12:3}
  The proof of Theorem \ref{theorem-KS-Coarser-v4:1} does not follow
  from that of \cite[Theorem 2.3]{MR2944781}. In fact, the author is
  unaware if, in the setting of Theorem \ref{theorem-KS-Coarser-v4:1},
  $X$ has a $\sigma$-king.
\end{remark}

\begin{remark}
  \label{remark-KS-Coarser-v12:2}
  Following the idea of Example \ref{example-KS-Coarser-v9:2}, one can
  easily characterise the spaces in which each quasi $\sigma$-king is
  a $\sigma$-king. Namely, for a quasi-king space $X$, the
  following are equivalent:
  \begin{enumerate}
  \item\label{item:KS-Coarser-vgg-rev:1} $\lll_\sigma=\ll_\sigma$, for
    each separately continuous weak selection $\sigma$ for $X$.
  \item\label{item:KS-Coarser-vgg-rev:2} $X$ is the topological sum of
    at most three connected subsets.
  \end{enumerate}

  Here, the requirement that $X$ is a quasi-king space is
  important. Indeed, the space in Example
  \ref{example-KS-Coarser-v9:2} satisfies
  \ref{item:KS-Coarser-vgg-rev:2}, but is not quasi-king. So,
  implicitly, such a partition of a quasi-king space $X$ must be of
  $\mathscr{T}_\sigma$-compact sets, for each (some) separately
  continuous weak selection $\sigma$ for $X$, see Propositions
  \ref{proposition-KS-Coarser-v9:1} and
  \ref{proposition-KS-Coarser-v4:2}.  Moreover,
  \ref{item:KS-Coarser-vgg-rev:2} implies that each separately
  continuous weak selection for $X$ is continuous (by Propositions
  \ref{proposition-KS-Coarser-v9:1} and
  \ref{proposition-KS-Coarser-v12:2}), therefore such quasi-king
  spaces are completely identical to kings spaces.
\end{remark}

\begin{remark}
  \label{remark-KS-Coarser-v16:1}
  Let $\sigma$ be a weak selection for $X$. Following \cite{MR567954},
  a point $p\in X$ will be called a \emph{$\sigma$-emperor} if it is
  the $\leq_\sigma$-maximal element of $X$, namely if $x\leq_\sigma p$
  for all $x\in X$. Thus, $X$ may have at most one $\sigma$-emperor,
  and each $\sigma$-emperor is a (quasi) $\sigma$-king. If $X$ is a
  finite set, then $X$ has exactly one $\sigma$-king if and only if
  that king is a $\sigma$-emperor \cite[Theorem 4]{MR567954}. In the
  setting of infinite sets, this is not necessarily true, and the
  property defines a special class of topological spaces. To this end,
  for convenience, let $\sel_2(X)$ be the collection of all weak
  selections for a set $X$. Then for a space $X$ with a separately
  continuous weak selection, the following are equivalent\textup{:}
  \begin{enumerate}
  \item\label{item:1} $X$ is $\mathscr{T}_\sigma$-compact and
    $\leq_\sigma$ is a linear order, for each separately continuous
    $\sigma\in\sel_2(X)$.
  \item Each separately continuous $\sigma\in\sel_2(X)$ has a
    $\sigma$-emperor.
  \item Each separately continuous $\sigma\in \sel_2(X)$ has exactly one
    quasi $\sigma$-king.
  \item Each separately continuous $\sigma\in\sel_2(X)$ has exactly one
    $\sigma$-king.
  \item $X$ is the topological sum of at most two
    $\mathscr{T}_\sigma$-compact sets, for each separately continuous
    $\sigma\in\sel_2(X)$.
  \end{enumerate}
  
  By Proposition \ref{proposition-KS-Coarser-v11:1}, the first
  condition implies that each separately continuous weak selection for
  $X$ is properly continuous.
\end{remark}

\section{Clopen Compactness}
\label{sec:clop-comp-spac}

Here, we show that every weakly orderable quasi-king space is compact
in the topology generated by its clopen subsets, which furnishes an
essential part in the proof of Theorem \ref{theorem-KS-Coarser-v11:1}.

\begin{theorem}
  \label{theorem-KS-Coarser-v1:1}
  Let $X$ be a weakly orderable quasi-king space. Then each clopen
  cover of $X$ has a finite subcover.
\end{theorem}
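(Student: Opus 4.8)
The goal is to show that a weakly orderable quasi-king space $X$ is compact with respect to the topology generated by its clopen subsets, i.e.\ every clopen cover has a finite subcover. Since $X$ is weakly orderable, fix a coarser linearly ordered topology; by \cite[Corollary 4.5]{gutev-nogura:09a} there is a properly continuous weak selection $\sigma$ whose order $\leq_\sigma$ is a linear order on $X$ generating this coarser topology, so $\mathscr{T}_\sigma=\mathscr{T}_{\leq_\sigma}$ is the open interval topology of a linear order. The quasi-king hypothesis applies to $\sigma$ (it is in particular separately continuous), so $X$ has a quasi $\sigma$-king $q$; since $\leq_\sigma$ is transitive, $x\lll_\sigma q$ just means $x\leq_\sigma q$, so $q$ is actually the $\leq_\sigma$-maximum of $X$. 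The plan is to argue by contradiction: suppose $\mathscr{U}$ is a clopen cover of $X$ with no finite subcover, and use it to build a separately continuous (in fact a lexicographical-sum) weak selection $\tau$ on $X$ that has no quasi $\tau$-king, contradicting the quasi-king property.

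First I would reduce to a clean order-theoretic picture. A clopen subset $C$ of $(X,\mathscr{T})$ is also clopen in the coarser interval topology? No — one only knows $\mathscr{T}_\sigma\subset\mathscr{T}$, so clopen-ness need not descend. Instead I work directly in $X$. Using that $\leq_\sigma$ is a linear order with a maximum $q$, and that (by separate continuity, cf.\ Proposition \ref{proposition-KS-Coarser-v9:1}) the $\leq_\sigma$-convex clopen pieces behave well, I would extract from a would-be counterexample cover $\mathscr{U}$ a strictly $\leq_\sigma$-increasing transfinite sequence of points witnessing non-compactness: pick clopen $U_0\in\mathscr{U}$, which cannot be all of $X$, pick a point outside it, cover that point, and iterate, taking "limits'' along the linear order. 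The key structural step is to partition $X$ into a $\leq_\sigma$-ordered family $\mathscr{D}$ of convex clopen blocks indexed so that no finite union of blocks is all of $X$; this is where I expect the real work to lie, because one must simultaneously keep the blocks clopen, convex, and refining $\mathscr{U}$, and control the order type of the index set so that it has no maximum (or a "bad'' cofinal structure).

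Granting such a partition $\mathscr{D}=\{\Delta_i : i\in I\}$ with $\Delta_i<_\sigma\Delta_j$ for $i<j$ in a linear order $I$ with no last element, I would then form a lexicographical $\gamma$-sum. On each block $\Delta_i$ put $\eta_i=\sigma\uhr\Delta_i$, which is a continuous weak selection for $\Delta_i$ by Proposition \ref{proposition-KS-Coarser-v9:1} applied to the convex (hence $\mathscr{T}_\sigma$-connected, or at least suitably behaved) piece — more carefully, I would instead choose $\gamma$ on $\mathscr{D}$ to encode a "cyclic'' obstruction in the spirit of Examples \ref{example-KS-Coarser-v9:1}–\ref{example-KS-Coarser-v9:2}: arrange three cofinal-type tails so that the resulting $\tau=\sum_{(\gamma,\Delta\in\mathscr{D})}\eta_\Delta$ is continuous by Proposition \ref{proposition-KS-Coarser-v12:2}, yet has no $\lll_\tau$-maximal element, because every candidate king lies in some block that is followed by another block in the $\gamma$-order. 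That contradicts $X$ being quasi-king and completes the proof.

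**Main obstacle.** The delicate point is manufacturing the open/clopen partition $\mathscr{D}$ of $X$ into $\leq_\sigma$-convex pieces refining the given clopen cover while controlling the index order so the lexicographical sum genuinely lacks a quasi-king; handling "limit'' stages — where the union of the blocks so far is clopen but its complement has no $\leq_\sigma$-least element — is exactly the subtlety that separates this from the finite King Chicken Theorem, and one must lean on separate continuity of $\sigma$ (Proposition \ref{proposition-KS-Coarser-v9:1}\ref{item:4}) to see that such a complement is itself $\leq_\sigma$-convex and open, so the induction can continue.
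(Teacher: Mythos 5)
Your reduction to a linear order $\leq_\sigma$ with $\mathscr{T}_\sigma=\mathscr{T}_{\leq_\sigma}$ coarser than the topology of $X$, and the observation that a quasi $\sigma$-king is then a genuine $\leq_\sigma$-maximum, are correct and match the opening move of the paper's argument. But the heart of your proof --- extracting from a clopen cover with no finite subcover a partition $\mathscr{D}$ of $X$ into clopen, $\leq_\sigma$-convex blocks whose index order has no last element, so that a lexicographical sum over $\mathscr{D}$ has no quasi-king --- is only announced, not carried out, and this is a genuine gap rather than a routine verification. The implication ``a clopen cover with no finite subcover yields an infinite open partition'' is false in general even for ordered spaces: $[0,\omega_1)$ has the clopen cover $\big\{[0,\alpha]:\alpha<\omega_1\big\}$ with no finite subcover, yet admits no infinite open partition (each block of an open partition is clopen, at most one block contains a tail, and the union of the remaining blocks is compact). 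So the quasi-king hypothesis must already enter the construction of $\mathscr{D}$, and your sketch does not say where. The limit stages you flag are exactly where the induction breaks: the union of infinitely many clopen blocks is open but need not be closed, so its complement need not be clopen, and ``leaning on separate continuity'' does not repair this. (A lexicographical sum over an infinite open partition does appear in the paper, but as Proposition \ref{proposition-KS-Coarser-v7:1}, proved under the hypothesis that such a partition is given.)

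The paper avoids all of this by turning your first observation into the whole proof. By Proposition \ref{proposition-KS-Coarser-v4:2}, a suborderable quasi-king space is compact (a maximum plus, via the reverse order, a minimum; then clopen jumps and Haar--K\"onig), so $(X,\mathscr{T}_\leq)$ is compact (Corollary \ref{corollary-KS-Coarser-v4:3}). The point you dismissed --- that clopen-ness need not descend to the coarser topology --- is precisely what the quasi-king hypothesis rescues: a clopen $A\subset X$ is itself quasi-king (Proposition \ref{proposition-KS-Coarser-v4:1}) and suborderable in $\mathscr{T}_\leq\uhr A$, hence compact by Proposition \ref{proposition-KS-Coarser-v4:2}, hence $\mathscr{T}_\leq$-closed; the same applies to $X\setminus A$, so $A$ is $\mathscr{T}_\leq$-open. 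A clopen cover of $X$ is therefore a $\mathscr{T}_\leq$-open cover of a compact space and has a finite subcover. I recommend abandoning the transfinite partition and arguing along these lines.
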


The proof of Theorem \ref{theorem-KS-Coarser-v1:1} is based on several
observations about quasi-king spaces. The next proposition shows that
the following property of king spaces is also valid for quasi-king
spaces, see \cite[Lemma 3.1]{MR2944781}.

\begin{proposition}
  \label{proposition-KS-Coarser-v4:1}
  If $X$ is a quasi-king space, then each clopen subset of $X$ is also
  a quasi-king space. 
\end{proposition}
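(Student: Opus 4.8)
The plan is to fix a clopen subset $C\subset X$ and verify the two requirements in the definition of quasi-king: that $C$ admits a separately continuous weak selection, and that every such weak selection on $C$ has a quasi-king. The first requirement is easy: since $X$ is quasi-king it has a separately continuous weak selection $\sigma$, and by Proposition \ref{proposition-KS-Coarser-v11:1}(i) together with the fact that $C$ is open, the restriction $\sigma\uhr C$ satisfies $\mathscr{T}_{\sigma\uhr C}\subset \mathscr{T}_\sigma\uhr C\subset \mathscr{T}\uhr C$, so $\sigma\uhr C$ is separately continuous on $C$.

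For the second requirement, let $\tau$ be an arbitrary separately continuous weak selection for $C$. The idea is to glue $\tau$ to a selection on the complement $X\setminus C$ so as to produce a separately continuous weak selection $\sigma$ on all of $X$, then use the quasi-king property of $X$ and push the resulting quasi $\sigma$-king back into $C$. Concretely, $\{C, X\setminus C\}$ is an open (indeed clopen) partition of $X$; since $X$ is quasi-king it has some separately continuous weak selection $\eta$, whose restriction $\eta\uhr(X\setminus C)$ is separately continuous on $X\setminus C$ by the same argument as above. Let $\gamma$ be the (unique) weak selection on the two-element set $\mathscr{D}=\{C,\,X\setminus C\}$ with $C<_\gamma X\setminus C$, and let $\sigma=\sum_{(\gamma,\Delta\in\mathscr{D})}\eta_\Delta$ be the lexicographical $\gamma$-sum of $\eta_C=\tau$ and $\eta_{X\setminus C}=\eta\uhr(X\setminus C)$. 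By Proposition \ref{proposition-KS-Coarser-v12:2}, $\sigma$ is a separately continuous weak selection for $X$.

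Now apply the quasi-king hypothesis on $X$ to get a quasi $\sigma$-king $q\in X$. The key claim is $q\in C$: indeed, by the definition of the lexicographical sum $C<_\sigma X\setminus C$, so if $q\in X\setminus C$ then no point of $C$ can $\leq_\sigma$-dominate $q$ (and in fact the whole of $C$ lies strictly $\leq_\sigma$-below $X\setminus C$, while every chain $x\leq_\sigma y_1\leq_\sigma\cdots\leq_\sigma y_n\leq_\sigma q$ from $x\in C$ would have to stay inside $C$ until it crosses to $X\setminus C$, which is fine, so this direction needs a little care) — wait, the honest argument is the reverse: since $C\neq\emptyset$, pick $x\in C$; we need a $\leq_\sigma$-chain from $x$ up to $q$, which is automatic, so $q\in X\setminus C$ is not immediately excluded. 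Instead, observe that $\sigma\uhr C=\tau$, and for $x,y\in C$ the relation $x\leq_\sigma y$ coincides with $x\leq_\tau y$; so if $q\in C$ we are essentially done, and the point is to rule out $q\notin C$. But if $q\in X\setminus C$, then since $C<_\sigma X\setminus C$ we have $y\leq_\sigma q$ for every $y\in C$, i.e. $q$ would already be a $\sigma$-emperor over $C$; this does not contradict anything directly. The genuinely needed observation is the opposite: take any $x\in X\setminus C$; a chain witnessing $x\lll_\sigma q$ must exist, and since $X\setminus C<_\sigma$ nothing except within itself... I will instead use that the quasi-king $q$ can be chosen inside $C$ by noting that $C$ is an \emph{upper} clopen piece is the wrong normalization — so I reverse the order in $\gamma$ to $X\setminus C<_\gamma C$, making $C$ the top piece; then $X\setminus C<_\sigma C$, and for any $x\in X$ the chain $x\leq_\sigma y_1\leq_\sigma\cdots\leq_\sigma y_n\leq_\sigma q$ has $y_n\leq_\sigma q$, forcing (if $q\in C$) nothing, but if $x\in C$ the chain must remain in $C$ since $C<_\sigma$ nothing below it, hence $q\in C$ and $q$ is a quasi $(\sigma\uhr C)$-king, i.e. a quasi $\tau$-king.

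The main obstacle, and the step I would write most carefully, is exactly this bookkeeping: choosing the orientation of $\gamma$ so that $C$ sits at the top of the lexicographical sum, and then checking that a quasi $\sigma$-king of $X$ is forced to lie in $C$ and that its defining chains, when the starting point is in $C$, never leave $C$ — so that it is genuinely a quasi $\tau$-king for $(C,\mathscr{T}\uhr C)$. Everything else (separate continuity of the restrictions via Proposition \ref{proposition-KS-Coarser-v11:1}(i), separate continuity of the lexicographical sum via Proposition \ref{proposition-KS-Coarser-v12:2}, nonemptiness of $C$) is routine.
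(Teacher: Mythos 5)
Your final argument is correct and is essentially the paper's own proof: extend the given separately continuous weak selection $\tau$ on the clopen set $C$ to a separately continuous weak selection $\sigma$ on $X$ via a lexicographical sum with $X\setminus C<_\sigma C$ (Proposition \ref{proposition-KS-Coarser-v12:2}), take a quasi $\sigma$-king $q$, and note that any chain $x\leq_\sigma y_1\leq_\sigma\dots\leq_\sigma y_n\leq_\sigma q$ starting at $x\in C$ cannot leave $C$, so $q\in C$ is a quasi $\tau$-king. The only issue is presentational: the false starts with the wrong orientation of $\gamma$ should be deleted, since the corrected choice (putting $C$ on top) is exactly what the paper does and the rest is routine.
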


\begin{proof}
  Let $A\subset X$ be a clopen set, and $\eta$ be a separately
  continuous weak selection for $A$. Since $X\setminus A$ is also
  clopen and has a separately continuous weak selection, it follows
  from Proposition \ref{proposition-KS-Coarser-v12:2} that $X$ has a
  separately continuous weak selection $\sigma$ with
  $\sigma\uhr A=\eta$ and $X\setminus A<_\sigma A$.  Then by
  hypothesis, $X$ has a quasi $\sigma$-king $p\in X$. For a point
  $x\in A$, this means that
  $x\leq_\sigma y_1\leq_\sigma \dots \leq_\sigma y_n\leq_\sigma p$,
  for some $y_1,\dots, y_n\in X$. However, $x\in A$ and
  $X\setminus A<_\sigma A$, which implies that
  $y_1,\dots, y_n,p\in A$. Accordingly, $p$ is a quasi $\eta$-king of
  $A$ because $\sigma\uhr A=\eta$.
\end{proof}

Subspaces of orderable spaces are not necessarily orderable, they are
called \emph{suborderable}. Their topology can be also described in
terms of ``order''-intervals. Briefly, a subset $\Delta\subset X$ of
an ordered set $(X,\leq)$ is called a \emph{$\leq$-interval}, or a
\emph{$\leq$-convex set}, if
$(a,b)_\leq=(a,\to)_\leq\cap(\gets,b)_\leq\subset \Delta$, for every
${a,b\in \Delta}$ with $a\leq b$. A topological space
$(X,\mathscr{T})$ is called \emph{generalised ordered} if it admits a
linear order $\leq$, called \emph{compatible}, such that the
corresponding open interval topology $\mathscr{T}_\leq$ is coarser
than the topology $\mathscr{T}$, and $\mathscr{T}$ has a base of
$\leq$-intervals. According to a result of E. \v{C}ech, generalised
ordered spaces are precisely the suborderable spaces, see e.g.\
\cite{cech:66,MR1795166}. We now get with ease that each suborderable
quasi-king space is compact, see \cite[Lemma 3.2 and Corollary
3.3]{MR2944781}.

\begin{proposition}
  \label{proposition-KS-Coarser-v4:2}
  Each suborderable quasi-king space is compact. 
\end{proposition}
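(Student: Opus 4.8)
The plan is to use the order structure of the suborderable (generalised ordered) space together with the quasi-king hypothesis, feeding suitably chosen weak selections into Proposition~\ref{proposition-KS-Coarser-v12:2}, in the spirit of \cite[Lemma 3.2 and Corollary 3.3]{MR2944781}. Let $X$ be a suborderable quasi-king space and fix, by \v{C}ech's theorem, a compatible linear order $\leq$, so that $\mathscr{T}_\leq$ is coarser than the topology of $X$ and $X$ has a base of $\leq$-convex open sets. First I would extract the endpoints: the weak selection $\sigma$ with $\sigma(\{x,y\})$ the $\leq$-smaller of $x$ and $y$ has $\leq_\sigma=\leq$ and $\mathscr{T}_\sigma=\mathscr{T}_\leq$, hence is separately continuous; since $\leq_\sigma$ is a linear order, Proposition~\ref{proposition-KS-Coarser-v9:2} gives $\lll_\sigma=\leq_\sigma$, so any quasi $\sigma$-king is just a $\leq$-greatest element. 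Thus $X$ has a $\leq$-maximum, and — using the ``$\leq$-larger'' selection, whose associated order is the reverse of $\leq$ — also a $\leq$-minimum.

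Next I would show that $(X,\leq)$ is Dedekind complete. Suppose a nonempty $A\subset X$ is bounded above but has no supremum, and set $C=\bigcup_{a\in A}(\gets,a)_\leq$ (the non-upper bounds of $A$) and $D=X\setminus C$ (the upper bounds). A routine check gives that $C$ and $D$ are nonempty and $\mathscr{T}$-open, $C<_\leq D$, $C$ has no $\leq$-greatest and $D$ has no $\leq$-least element. Then $\{C,D\}$ is an open partition of $X$, and by Proposition~\ref{proposition-KS-Coarser-v12:2} the lexicographical sum $\sigma$ of the ``$\leq$-smaller'' selections on $C$ and on $D$ with $D<_\sigma C$ is separately continuous. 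But $\leq_\sigma$ is again a linear order (a lexicographical sum of linear orders), so a quasi $\sigma$-king would be its greatest element, that is, the $\leq$-greatest element of $C$ — which does not exist, contradicting the quasi-king property. Hence every bounded-above nonempty subset of $X$ has a supremum, and an elementary argument (take the supremum of the set of lower bounds) supplies infima as well.

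The step I expect to be the main obstacle is that, unlike for linearly ordered spaces, completeness of $\leq$ with both endpoints does \emph{not} force the possibly strictly finer topology of $X$ to be compact — the lower-Sorgenfrey interval is complete as an order but not compact. I would rule out the two responsible local configurations by the same device. If $[x_0,\to)_\leq$ is $\mathscr{T}$-open, $x_0$ has no immediate $\leq$-predecessor, and $x_0\neq\min X$, then $\{(\gets,x_0)_\leq,[x_0,\to)_\leq\}$ is an open partition, and the lexicographical sum of the ``$\leq$-smaller'' selections on its two pieces with $[x_0,\to)_\leq<_\sigma(\gets,x_0)_\leq$ is separately continuous with $\leq_\sigma$ a linear order whose greatest element would be the $\leq$-greatest element of $(\gets,x_0)_\leq$ — which does not exist; contradiction. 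Dually, using the ``$\leq$-larger'' selections and the partition $\{(\gets,x_0]_\leq,(x_0,\to)_\leq\}$, there is no $x_0\neq\max X$ with $(\gets,x_0]_\leq$ open and no immediate $\leq$-successor.

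Finally I would close with the classical supremum argument. Given an open cover $\mathscr{U}$ of $X$, let $S$ be the set of $x\in X$ for which $[\min X,x]_\leq$ has a finite subcover from $\mathscr{U}$; this is a nonempty initial segment, so by completeness $s:=\sup S$ exists. Picking $U\in\mathscr{U}$ and a $\leq$-convex open $V$ with $s\in V\subseteq U$, the two restrictions just established are exactly what makes the standard ``$s\in S$, then $s=\max X$'' reasoning go through: when $V$ meets $(\gets,s)_\leq$ one argues as in a linearly ordered space, and when it does not, $[s,\to)_\leq$ is open, so $s$ has an immediate $\leq$-predecessor (whence $s\in S$); symmetrically, if $s<\max X$ and $V$ misses $(s,\to)_\leq$ then $(\gets,s]_\leq$ is open and $s$ has an immediate $\leq$-successor, forcing that successor into $S$ — impossible. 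Hence $s=\max X$ and $X=[\min X,\max X]_\leq$ is finitely covered, so $X$ is compact. (Equivalently, the two restrictions show that the topology of $X$ coincides with $\mathscr{T}_\leq$, so $X$ is a linearly ordered space with a Dedekind complete order and both endpoints, hence compact.)
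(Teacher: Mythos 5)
Your proof is correct, and its second half takes a genuinely different route from the paper's. Both arguments open the same way: the $\min_\leq$ and $\max_\leq$ selections are separately continuous with linear associated orders, so by Proposition~\ref{proposition-KS-Coarser-v9:2} their quasi-kings are genuine $\leq$-extrema, giving $X$ both endpoints. From there the paper shows that $X$ is in fact \emph{orderable} with respect to $\leq$: for any clopen decomposition $X=E\cup D$ with $E<D$, Proposition~\ref{proposition-KS-Coarser-v4:1} makes $E$ and $D$ quasi-king, hence $E$ has a maximum and $D$ a minimum, so $(E,D)$ is a jump; orderability then follows from \cite[Lemma 6.4]{gutev:07b}, and compactness from the Haar--K\"onig criterion applied to clopen subsets. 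You instead prove Dedekind completeness and exclude the Sorgenfrey-type local refinements of $\mathscr{T}_\leq$ directly, in each case by exhibiting a two-element open partition and a lexicographical $\gamma$-sum (Proposition~\ref{proposition-KS-Coarser-v12:2}) whose associated linear order has no greatest element, contradicting the quasi-king hypothesis; you then close with the classical supremum argument. The underlying mechanism --- a separately continuous selection with a linear order lacking a maximum kills quasi-kingness --- is identical in both proofs; the paper packages it once through Proposition~\ref{proposition-KS-Coarser-v4:1} and delegates the rest to cited structural results, which is shorter, while your version is self-contained, avoids \cite[Lemma 6.4 and Proposition 6.1]{gutev:07b} and Haar--K\"onig entirely, and makes explicit the point (correctly flagged as the main obstacle) that the GO topology must actually coincide with $\mathscr{T}_\leq$, something the paper obtains only implicitly via orderability.
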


\begin{proof}
  Let $X$ be a quasi-king space which is suborderable with respect to
  a linear order $\leq$. Then $\eta(\{x,y\})=\min_\leq \{x,y\}$,
  $x,y\in X$, is a continuous weak selection for $X$ with
  $\leq_\eta=\leq$. Hence, $X$ has a unique quasi $\eta$-king, which
  is the $\leq$-maximal element of $X$, see Proposition
  \ref{proposition-KS-Coarser-v9:2}. Since $X$ is also suborderable
  with respect to the reverse linear order, it has a $\leq$-minimal
  element as well. This implies that $X$ is actually orderable with
  respect to $\leq$. Indeed, let $E$ and $D$ be nonempty clopen
  subsets of $X$ such that $E< D$ and $X=E\cup D$.  By Proposition
  \ref{proposition-KS-Coarser-v4:1}, both $E$ and $D$ are quasi-king
  spaces. Hence, by what has been shown above, $E$ has a maximal
  element and $D$ has a minimal one. Thus, the pair $(E,D)$ is a
  clopen jump and, consequently, $X$ is orderable with respect to
  $\leq$, see e.g.\ \cite[Lemma 6.4]{gutev:07b}. This also implies
  that $X$ must be compact. Namely, each nonempty clopen set
  $A\subset X$ is both a quasi-king space (by Proposition
  \ref{proposition-KS-Coarser-v4:1}) and suborderable with respect to
  $\leq$. So, by the same token, it has maximal and minimal elements.
  Therefore, $X$ is compact \cite{haar-konig:10}, see also
  \cite[Proposition 6.1]{gutev:07b}.
\end{proof}

\begin{corollary}
  \label{corollary-KS-Coarser-v4:3}
  Let $X$ be a quasi-king space which is weakly orderable with respect
  to a linear order $\leq$.  Then the open interval topology
  $\mathscr{T}_{\leq}$ is a coarser compact topology on $X$. 
\end{corollary}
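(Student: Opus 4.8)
The plan is to transfer the quasi-king property from $X$ (with its given topology $\mathscr{T}$) to the coarser space $Y=(X,\mathscr{T}_{\leq})$ and then quote Proposition \ref{proposition-KS-Coarser-v4:2}. First I would record the trivial half of the statement: since $X$ is weakly orderable with respect to $\leq$, by definition $\mathscr{T}_{\leq}\subset \mathscr{T}$, so $\mathscr{T}_{\leq}$ is a coarser topology on $X$; it remains only to prove it is compact. Consider the order-topology space $Y=(X,\mathscr{T}_{\leq})$, which is orderable with respect to $\leq$, hence suborderable and Hausdorff. The standard selection $\eta(\{x,y\})=\min_{\leq}\{x,y\}$, $x,y\in X$, has $\leq_\eta=\leq$, so $\mathscr{T}_\eta=\mathscr{T}_{\leq}$ is exactly the topology of $Y$; by Proposition \ref{proposition-KS-Coarser-v11:1} it is continuous, in particular separately continuous, for $Y$. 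Thus $Y$ has a separately continuous weak selection.

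The key step is to show that every separately continuous weak selection $\sigma$ for $Y$ has a quasi $\sigma$-king. If $\sigma$ is separately continuous for $Y$, then $\mathscr{T}_\sigma\subset \mathscr{T}_{\leq}$, and since $\mathscr{T}_{\leq}\subset \mathscr{T}$ we get $\mathscr{T}_\sigma\subset \mathscr{T}$, i.e.\ $\sigma$ is also separately continuous for $X$. As $X$ is a quasi-king space, $\sigma$ has a quasi $\sigma$-king. Hence $Y$ is a quasi-king space.

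Finally, $Y$ is a suborderable quasi-king space, so Proposition \ref{proposition-KS-Coarser-v4:2} yields that $Y$ is compact; that is, $\mathscr{T}_{\leq}$ is a coarser compact topology on $X$, as claimed. The only point that requires a moment's care is the direction of the comparison of topologies: passing to the coarser topology $\mathscr{T}_{\leq}$ \emph{enlarges} the class of separately continuous weak selections (every one for $Y$ is also one for $X$), which is precisely what makes the quasi-king hypothesis descend from $X$ to $Y$; beyond this observation the argument is formal.
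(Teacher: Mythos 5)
Your proposal is correct and follows essentially the same route as the paper: the coarser topology $\mathscr{T}_{\leq}$ only enlarges the class of separately continuous weak selections, so $(X,\mathscr{T}_{\leq})$ inherits the quasi-king property from $X$, and Proposition \ref{proposition-KS-Coarser-v4:2} then gives compactness. The only difference is that you spell out the existence of a separately continuous weak selection on $(X,\mathscr{T}_{\leq})$ via $\min_{\leq}$, a detail the paper leaves implicit.
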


\begin{proof}
  The topology $\mathscr{T}_\leq$ is a coarser topology on $X$, and,
  in particular, each separately continuous weak selection for
  $(X,\mathscr{T}_\leq)$ is a separately continuous weak selection for
  $X$. Therefore, the orderable space $(X,\mathscr{T}_{\leq})$ is also
  quasi-king. Hence, by Proposition \ref{proposition-KS-Coarser-v4:2},
  $(X,\mathscr{T}_\leq)$ is compact.
\end{proof}

We are now ready for the proof of Theorem
\ref{theorem-KS-Coarser-v1:1}. 

\begin{proof}[Proof of Theorem \ref{theorem-KS-Coarser-v1:1}]
  Let $X$ be a weakly orderable space with respect to a linear order
  $\leq$. According to Corollary \ref{corollary-KS-Coarser-v4:3}, it
  suffices to show that each clopen subset of $X$ is open in
  $(X,\mathscr{T}_\leq)$. So, let $A\subset X$ be clopen in $X$. Then
  $A$ is quasi-king (by Proposition \ref{proposition-KS-Coarser-v4:1})
  and suborderable in the subspace topology $\mathscr{T}_\leq \uhr
  A$. In fact, $A$ is a quasi-king space with respect
  $\mathscr{T}_\leq \uhr A$ because $\mathscr{T}_\leq \uhr A$ is a
  coarser topology on $A$ and the weak selection $\min_{\leq}\{x,y\}$,
  $x,y\in A$, is continuous with respect to this topology (by
  Proposition \ref{proposition-KS-Coarser-v11:1}). Thus, by
  Proposition \ref{proposition-KS-Coarser-v4:2}, $A$ is a compact
  subset of $(X,\mathscr{T}_\leq)$. For the same reason, so is
  $X\setminus A$. Therefore, $A=X\setminus(X\setminus A)$ is open in
  $(X,\mathscr{T}_\leq)$.
\end{proof}

\section{Coarser Compact Selection Topologies}
\label{sec:coars-comp-select}

Here, we prove the following special case of
Theorem \ref{theorem-KS-Coarser-v11:1}.

\begin{theorem}
  \label{theorem-KS-Coarser-v3:1}
  Let $X$ be a weakly orderable quasi-king space, and $\sigma$ be a
  separately continuous weak selection for $X$. Then
  $\mathscr{T}_\sigma$ is a compact coarser topology on $X$.
\end{theorem}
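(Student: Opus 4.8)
Since $\sigma$ is separately continuous we have $\mathscr{T}_\sigma\subset\mathscr{T}$, so $\mathscr{T}_\sigma$ is already a coarser topology on $X$ and only its compactness is at issue. First I would record two easy facts. One: $(X,\mathscr{T}_\sigma)$ is again a quasi-king space — $\sigma$ is separately continuous for $(X,\mathscr{T}_\sigma)$ by Proposition~\ref{proposition-KS-Coarser-v11:1}, and any separately continuous weak selection $\tau$ for $(X,\mathscr{T}_\sigma)$ satisfies $\mathscr{T}_\tau\subset\mathscr{T}_\sigma\subset\mathscr{T}$, hence is separately continuous for $X$ and so has a quasi $\tau$-king. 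Two: since $\mathscr{T}_\sigma\subset\mathscr{T}$, every $\mathscr{T}_\sigma$-clopen set is $\mathscr{T}$-clopen, so Theorem~\ref{theorem-KS-Coarser-v1:1} gives that every $\mathscr{T}_\sigma$-clopen cover of $X$ has a finite subcover, and Proposition~\ref{proposition-KS-Coarser-v4:1} gives that every $\mathscr{T}_\sigma$-clopen subspace of $X$ is itself a quasi-king space.

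Next I would analyse the decomposition of $(X,\mathscr{T}_\sigma)$ into its connected components. For a $\mathscr{T}_\sigma$-component $Z$, Proposition~\ref{proposition-KS-Coarser-v9:1} gives that $\leq_\sigma$ is a linear order on $Z$, that $\mathscr{T}_\sigma\uhr Z$ is its open interval topology, and that every $x\notin Z$ satisfies $x<_\sigma Z$ or $Z<_\sigma x$; thus $Z$ is a connected orderable subspace and a $\leq_\sigma$-convex subset of $X$. The crux is to prove that each $Z$ is $\mathscr{T}_\sigma$-compact, i.e.\ that $Z$ has a $\leq_\sigma$-greatest and a $\leq_\sigma$-least element. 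This is where the quasi-king hypothesis does its work: if $Z$ had no $\leq_\sigma$-greatest element I would reorder $\sigma$ into a new weak selection $\tau$ that keeps $\sigma$ on $Z$ and on $X\setminus Z$ but declares $X\setminus Z<_\tau Z$, so that $Z$ becomes the $\leq_\tau$-topmost block, and check — using that $Z$ is $\mathscr{T}$-closed and $\leq_\sigma$-convex, and Proposition~\ref{proposition-KS-Coarser-v12:2} when $Z$ is moreover clopen — that $\tau$ is still separately continuous for $X$ yet has no quasi $\tau$-king, contradicting that $X$ is quasi-king; the reverse order rules out the absence of a least element.

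Finally I would upgrade ``every $\mathscr{T}_\sigma$-clopen cover of $X$ has a finite subcover'' to genuine compactness of $(X,\mathscr{T}_\sigma)$, using that the components are now compact $\leq_\sigma$-convex blocks. Given a cover of $X$ by subbasic $\leq_\sigma$-intervals, each component $Z$ is covered by finitely many of them; since $Z$ is $\leq_\sigma$-convex with a least and a greatest element, I would engulf $Z$ in a $\mathscr{T}_\sigma$-clopen set contained in that finite union — the endpoints of $Z$ supplying the ``jumps'' just past $Z$ needed to build such a clopen set — and then extract a finite subcover by clopen compactness; equivalently, one proves $(X,\mathscr{T}_\sigma)$ suborderable by linearly ordering the compact blocks compatibly and applies Proposition~\ref{proposition-KS-Coarser-v4:2}. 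The hard part will be exactly this last step, namely controlling how the compact components sit inside $(X,\mathscr{T}_\sigma)$ so that clopen sets can separate them; and it is here — rather than in the component-wise analysis — that the weak orderability of $X$ is indispensable, since without it there are quasi-king spaces all of whose clopen covers have finite subcovers yet which are not compact.
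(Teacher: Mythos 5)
Your overall architecture tracks the paper's: pass to $(X,\mathscr{T}_\sigma)$, which is again quasi-king; prove that each $\mathscr{T}_\sigma$-component is compact; engulf the components in $\mathscr{T}_\sigma$-clopen sets and finish with Theorem~\ref{theorem-KS-Coarser-v1:1}. The final step you single out as ``the hard part'' is in fact routine in the paper: $(X,\mathscr{T}_\sigma)$ is rim-finite and its components coincide with its quasi-components (Proposition~\ref{proposition-KS-Coarser-v7:2}), so a compact component sitting inside a $\mathscr{T}_\sigma$-open set is trapped in a $\mathscr{T}_\sigma$-clopen subset of it (Proposition~\ref{proposition-KS-Coarser-v5:2}); no suborderability of $(X,\mathscr{T}_\sigma)$ is needed, and weak orderability enters only through Theorem~\ref{theorem-KS-Coarser-v1:1}, which you had already invoked in your first paragraph.

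The genuine gap is in the crux, the compactness of a component $Z$. Your modified selection $\tau$ --- keep $\sigma$ on $Z$ and on $X\setminus Z$, declare $X\setminus Z<_\tau Z$ --- need not be separately continuous, because $Z$ is closed but in general not open. Concretely, for $x\notin Z$ with $Z<_\sigma x$ one has $(x,\to)_{\leq_\tau}=Z\cup(x,\to)_{\leq_\sigma}$; if $Z$ has a $\leq_\sigma$-least element $z_0$ (entirely compatible with $Z$ having no greatest element), every neighbourhood of $z_0$ may meet $\{w: w<_\sigma Z\}$, and non-transitivity of $\leq_\sigma$ gives no reason for such $w$ to satisfy $x<_\sigma w$, so $(x,\to)_{\leq_\tau}$ need not be open. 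Your appeal to Proposition~\ref{proposition-KS-Coarser-v12:2} covers only the case where $Z$ is clopen, which is precisely the case you do not need: if $Z$ had neither a least nor a greatest element it would already be $\mathscr{T}_\sigma$-open by convexity. The paper's Lemma~\ref{lemma-KS-Coarser-v16:1} circumvents this by not modifying the selection at all: assuming $Z$ has no $\leq_\sigma$-greatest element, the set $Y=\bigcup_{z\in Z}(\gets,z)_{\leq_\sigma}$ is $\mathscr{T}_\sigma$-clopen, contains $Z$, and already satisfies $Y\setminus Z<_\sigma Z$ (Proposition~\ref{proposition-KS-Coarser-v10:1}); then $Y$ is a quasi-king clopen subspace (Proposition~\ref{proposition-KS-Coarser-v4:1}) in which $Z$ is the topmost block with no top, so $\sigma\uhr Y$ has no quasi-king --- the contradiction you wanted, localised to $Y$. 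If you insist on a lexicographical reordering, it must be performed along the clopen set $Y$, not along $Z$ itself; the absence of a least element is then excluded via the reverse selection $\sigma^*$, as you indicate.
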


The proof of Theorem \ref{theorem-KS-Coarser-v3:1} is based on 
properties of components relative to selection topologies. The
\emph{components} (called also \emph{connected components}) are the
maximal connected subsets of a space $X$. They form a closed partition
$\mathscr{C}[X]$ of $X$, and each point $x\in X$ is contained in a
unique component $\mathscr{C}[x]$ called the \emph{component} of $x$
in $X$. The \emph{quasi-component} $\mathscr{Q}[x]$ of a point
$x\in X$ is the intersection of all clopen subsets of $X$ containing
$x$. The quasi-components also form a partition $\mathscr{Q}[X]$ of
$X$, thus they are simply called \emph{quasi-components} of $X$. Each
component of a point is contained in the quasi-component of that
point, but the converse is not necessarily true. However, if $X$ has a
continuous weak selection, then $\mathscr{C}[x]= \mathscr{Q}[x]$ for
every $x\in X$ \cite[Theorem 4.1]{gutev-nogura:00b}. The property
remains valid for the components of selection topologies.

\begin{proposition}
  \label{proposition-KS-Coarser-v7:2}
  Let $\sigma$ be a weak selection for $X$. Then each quasi-component
  of $(X,\mathscr{T}_\sigma)$ is connected.
\end{proposition}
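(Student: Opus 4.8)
The plan is to show that the quasi-component $\mathscr{Q}[x]$ of a point $x$ in $(X,\mathscr{T}_\sigma)$ coincides with the component $\mathscr{C}[x]$; since components are connected by definition, this gives the result. The inclusion $\mathscr{C}[x]\subset\mathscr{Q}[x]$ always holds, so the work is in the reverse inclusion. Write $Z=\mathscr{Q}[x]$ and suppose for contradiction that $Z$ is not connected, say $Z=E\cup D$ where $E,D$ are disjoint, nonempty, and closed in the subspace $(Z,\mathscr{T}_\sigma\uhr Z)$, with $x\in E$.

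First I would try to pin down, using $\leq_\sigma$, how the separation $Z=E\cup D$ interacts with points outside $Z$. The idea is to look at $\sigma\uhr Z$ and the intervals it generates inside $Z$; the subbasic $\mathscr{T}_\sigma$-open sets restricted to $Z$ are the $\leq_\sigma$-rays cut down to $Z$, so $E$ and $D$ can each be written as a union of relatively open pieces of the form $(\gets,a)_{\leq_\sigma}\cap Z$ and $(a,\to)_{\leq_\sigma}\cap Z$. The key point I want to extract is a genuine \emph{clopen} separation of $Z$ in $\mathscr{T}_\sigma\uhr Z$ that can be promoted to a clopen subset of the whole space $(X,\mathscr{T}_\sigma)$ separating $x$ from some point of $Z$, contradicting that $Z$ is the quasi-component of $x$. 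For this I expect to need: (i) each of $E,D$ is also \emph{open} in $Z$ (automatic from a separation of a space into two closed pieces), and (ii) a description of how $X\setminus Z$ sits relative to $E$ and $D$ — concretely, I want to argue that every point of $X\setminus Z$ lies either entirely $\leq_\sigma$-below $Z$ or entirely $\leq_\sigma$-above $Z$, and moreover consistently below/above the separation, so that adding it to $E$ or to $D$ keeps the two pieces clopen in $X$.

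The mechanism for (ii) is the analogue of Proposition~\ref{proposition-KS-Coarser-v9:1}\ref{item:4} adapted to a quasi-component rather than a connected set. Since $Z$ is a quasi-component, for any point $p\notin Z$ there is a $\mathscr{T}_\sigma$-clopen set $C$ with $p\in C$, $Z\cap C=\emptyset$; running the argument of \cite[Proposition~2.4]{MR3430989} with this clopen set in place of a component, one should get that $p<_\sigma Z$ or $Z<_\sigma p$. Splitting $X\setminus Z$ into the ``low'' part $L=\{p: p<_\sigma Z\}$ and the ``high'' part $H=\{p: Z<_\sigma p\}$, and using separate continuity of $\sigma$ (i.e.\ $\mathscr{T}_\sigma\subset$ the ray topology, so all the $\leq_\sigma$-rays are $\mathscr{T}_\sigma$-open) together with the fact that $Z$ is an intersection of clopen sets, I would show $E\cup L$ (or whichever combination is forced) is clopen in $(X,\mathscr{T}_\sigma)$: it is open because it is a union of $\leq_\sigma$-rays and relatively open pieces of clopen sets, and its complement $D\cup H$ is open for the symmetric reason. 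This clopen set contains $x$ but misses $D\neq\emptyset$, contradicting $Z=\mathscr{Q}[x]$.

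The main obstacle I anticipate is handling the endpoints of the separation — that is, ensuring that the rays used to build the clopen separating set really do fit together into something open and with open complement, given that $\leq_\sigma$ is \emph{not transitive}. One has to be careful that a point of $E$ near the ``cut'' is not $\leq_\sigma$-related to points of $D$ in a way that prevents a clean ray description; the resolution should be to exploit that $E$ and $D$ are each relatively closed (not merely open) in $Z$, so the cut is realized by actual interval endpoints on both sides, and to work with $\sigma\uhr Z$ directly rather than with $\leq_\sigma$ on all of $X$. If that still leaves gaps, an alternative is to first apply Proposition~\ref{proposition-KS-Coarser-v9:1}\ref{item:6} to each would-be component inside $Z$ to get that the subspace topology on it is a genuine selection topology, then glue via a lexicographical sum as in Proposition~\ref{proposition-KS-Coarser-v12:2}; but I expect the direct ray argument to suffice.
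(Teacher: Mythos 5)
Your overall strategy---reduce to $\mathscr{Q}[x]\subset\mathscr{C}[x]$ and refute a putative separation $Z=E\cup D$ of the quasi-component---is a genuinely different route from the paper, whose proof is a one-line reduction: $\sigma$ is separately continuous for $(X,\mathscr{T}_\sigma)$ by Proposition \ref{proposition-KS-Coarser-v11:1}, and the statement is then imported from \cite[Corollary 2.3]{MR3430989}. So you are in effect reproving that external result, which is legitimate, but your sketch has a gap at exactly the point where all the work lies. Your step (ii) --- that every $p\in X\setminus Z$ satisfies $p<_\sigma Z$ or $Z<_\sigma p$ --- is not obtained by ``running the argument of \cite[Proposition 2.4]{MR3430989} with a clopen set in place of a component''. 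That argument covers $Z$ by the two disjoint relatively open sets $Z\cap(\gets,p)_{\leq_\sigma}$ and $Z\cap(p,\to)_{\leq_\sigma}$ and uses \emph{connectedness} of $Z$ to conclude that one of them is empty; here you have assumed $Z$ is disconnected, so the dichotomy is unavailable, and a clopen set $C$ with $p\in C$ and $C\cap Z=\emptyset$ carries no information whatsoever about $\leq_\sigma$. Moreover, if (ii) failed for some $p$, that $p$ would itself induce a separation of $Z$ by two open rays, and you would still face the real problem: promoting a relative separation of the closed set $Z$ to a clopen separation of $X$. That promotion is the entire content of the proposition, and the sketch assumes it rather than proves it.

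The second gap is the claimed clopenness of $E\cup L$. The sets $L=\{p:p<_\sigma Z\}$ and $H=\{p:Z<_\sigma p\}$ are \emph{intersections} of open rays, $L=\bigcap_{z\in Z}(\gets,z)_{\leq_\sigma}$, not unions, so they need not be $\mathscr{T}_\sigma$-open; and since $\leq_\sigma$ is not transitive, a point lying $\leq_\sigma$-below one element of $Z$ need not lie in $L$, so no subbasic neighbourhood of a point of $L$ is visibly contained in $E\cup L$. Compare Proposition \ref{proposition-KS-Coarser-v10:1}, where producing a single $\mathscr{T}_\eta$-clopen set around $Z$ already requires $Z$ to be a connected component without a maximal element, and closedness of the resulting set $Y$ hinges on the identity $Y=(\gets,x]_{\leq_\eta}\cup Z$, itself a consequence of Proposition \ref{proposition-KS-Coarser-v9:1}\ref{item:4} for the \emph{connected} $Z$. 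To close your argument you would have to carry out such a clopen-set construction without connectedness, which is precisely what \cite[Corollary 2.3]{MR3430989} accomplishes; as written, the proposal does not.
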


\begin{proof}
  By Proposition \ref{proposition-KS-Coarser-v11:1}, $\sigma$ is a
  separately continuous weak selection for
  $(X,\mathscr{T}_\sigma)$. Then the property follows from
  \cite[Corollary 2.3]{MR3430989}.
\end{proof}

Regarding Proposition \ref{proposition-KS-Coarser-v7:2}, let us
explicitly remark that if $C\subset X$ is a component of a space $X$
and $\sigma$ is a separately continuous weak selection for $X$, then
$C$ is also a connected subset of $(X,\mathscr{T}_\sigma)$. However,
$C$ is not necessarily a $\mathscr{T}_\sigma$-component, namely a
component of the space $(X,\mathscr{T}_\sigma)$. Keeping this in mind, we have
the following construction of clopen sets associated to
$\mathscr{T}_\sigma$-components.

\begin{proposition}
  \label{proposition-KS-Coarser-v10:1}
  Let $\eta$ be a weak selection for $X$, and $Z\subset X$ be a
  $\mathscr{T}_\eta$-component of $X$ which has no
  $\leq_\eta$-maximal element. Then $Z$ is contained in a
  $\mathscr{T}_\eta$-clopen set $Y\subset X$ with $Y\setminus
  Z<_\eta Z$.
\end{proposition}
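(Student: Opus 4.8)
The plan is to build the set $Y$ as a $\leq_\eta$-downward-like closure of $Z$, exploiting Proposition \ref{proposition-KS-Coarser-v9:1} to control how points outside $Z$ sit relative to $Z$. Since $Z$ is a $\mathscr{T}_\eta$-component, it is in particular $\mathscr{T}_\eta$-connected, so Proposition \ref{proposition-KS-Coarser-v9:1}\ref{item:4} applies: for every $x\notin Z$ we have either $x<_\eta Z$ or $Z<_\eta x$. This partitions $X\setminus Z$ into the two pieces $L=\{x\in X\setminus Z: x<_\eta Z\}$ and $R=\{x\in X\setminus Z: Z<_\eta x\}$. The natural candidate is $Y=L\cup Z$, so that $Y\setminus Z=L<_\eta Z$ is immediate from the definition; the whole content of the proposition is then showing that $Y$ is $\mathscr{T}_\eta$-clopen.

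First I would check that $Y$ is $\mathscr{T}_\eta$-open. Take $y\in Y$. If $y\in L$, pick any $z\in Z$ (here one uses that $Z$, being a component, is nonempty); then $y<_\eta z$, so $y\in(\gets,z)_{\leq_\eta}$, a subbasic open set. I must verify $(\gets,z)_{\leq_\eta}\subset Y$: if $w<_\eta z$ then $w\notin R$ (elements of $R$ satisfy $z<_\eta w$ for all $z\in Z$, contradicting totality/antisymmetry of $\leq_\eta$ applied to this particular $z$), so $w\in Y$. If instead $y\in Z$, I use the hypothesis that $Z$ has no $\leq_\eta$-maximal element: choose $z'\in Z$ with $y<_\eta z'$, and then $y\in(\gets,z')_{\leq_\eta}\subset Y$ by the same argument as before. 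Hence $Y$ is open.

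Next, $Y$ is $\mathscr{T}_\eta$-closed, equivalently $R=X\setminus Y$ is $\mathscr{T}_\eta$-open. Take $x\in R$, so $Z<_\eta x$, and in particular $z<_\eta x$ for every $z\in Z$; fix one such $z$, giving $x\in(z,\to)_{\leq_\eta}$. I claim $(z,\to)_{\leq_\eta}\subset R$: if $z<_\eta w$, then $w\notin L$ (an element of $L$ satisfies $w<_\eta z$, contradicting antisymmetry for this $z$), and $w\notin Z$; for the latter, suppose $w\in Z$ — but $Z$ is $\mathscr{T}_\eta$-connected, hence by Proposition \ref{proposition-KS-Coarser-v9:1}\ref{item:5} the relation $\leq_\eta$ is a linear order on $Z$, and more to the point $z\in Z$ with $z<_\eta w$ would force $z$ not to be $\leq_\eta$-maximal in $Z$ only if $w\in Z$, which is consistent, so this naive argument fails. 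The fix is to use part \ref{item:4} in the other direction: since $w<_\eta z$ is false and $w\neq z$, if $w\in Z$ were false we would be done, so the subtlety is genuinely that $(z,\to)_{\leq_\eta}$ can meet $Z$. The main obstacle is exactly this: one cannot take a single basic neighbourhood; instead, for $x\in R$ one should use that $Z<_\eta x$ means $x$ lies above the \emph{whole} set $Z$, and cover $R$ differently — e.g. show $R=\bigcup_{z\in Z}(z,\to)_{\leq_\eta}\setminus Z$ is not obviously open, so the correct move is to establish $R$ open by showing $Y$ closed via: $X\setminus R = L\cup Z$, and $Z$ itself is $\mathscr{T}_\eta$-clopen \emph{provided} $Z$ has no $\leq_\eta$-maximal element together with (the unused case of) having no $\leq_\eta$-minimal element — but the hypothesis only gives one side. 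I expect the author's actual argument handles this by taking $Y$ to be the union of $Z$ with all $(\gets,z)_{\leq_\eta}$, $z\in Z$, proving openness of $Y$ as above, and proving $X\setminus Y$ open by a symmetric neighbourhood argument that crucially invokes Proposition \ref{proposition-KS-Coarser-v9:1}\ref{item:4} to rule out any $w$ with $z<_\eta w$ for some $z\in Z$ lying in $Z$ unless $w$ is below the top of $Z$ — and since $Z$ has no top, combined with connectedness, $(z,\to)_{\leq_\eta}\cap Z$ is a proper initial-type piece whose complement argument closes. I would therefore organise the write-up as: (1) define $L,R,Y$; (2) prove $Y$ open; (3) prove $R$ open using that for $x\in R$ and the connectedness/no-maximum of $Z$ one finds $z_0\in Z$ with $(z_0,\to)_{\leq_\eta}\subset R$, the key point being that any $w$ with $z_0<_\eta w$ satisfies $w\notin Z$ because otherwise $z_0,w\in Z$ with $z_0<_\eta w$ is fine but then $w$ is still not maximal so we can push further — resolving this requires the observation that $R = X\setminus \overline{Z}^{\,\mathscr{T}_\eta}$ relative to the right structure, i.e. the real engine is that $Z$ is a $\mathscr{T}_\eta$-\emph{quasi-component} (Proposition \ref{proposition-KS-Coarser-v7:2} identifies components and quasi-components), so $Z$ is an intersection of clopen sets and one of them must separate $Z$ from any given $x\in R$ on the correct side; (4) conclude $Y=L\cup Z$ is clopen with $Y\setminus Z=L<_\eta Z$. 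The hard part is step (3): getting a single clopen (or open) set witnessing that $R$ is open, and the cleanest route is via the quasi-component description plus part \ref{item:4}, rather than a bare interval argument.
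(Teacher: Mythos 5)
Your construction of $Y=L\cup Z$ and your proof that $Y$ is $\mathscr{T}_\eta$-open are correct and coincide with the paper's (the paper takes $Y=\bigcup_{z\in Z}(\gets,z)_{\leq_\eta}$, which is the same set: it is open as a union of subbasic open sets, it contains $Z$ because $Z$ has no $\leq_\eta$-maximal element, and $Y\setminus Z=L<_\eta Z$ by Proposition \ref{proposition-KS-Coarser-v9:1}\ref{item:4}). The genuine gap is in the closedness half. You correctly diagnose that the naive attempt to show $R=X\setminus Y$ open by covering it with sets $(z,\to)_{\leq_\eta}$, $z\in Z$, fails because each such set meets $Z$ (as $Z$ has no $\leq_\eta$-maximal element and $\leq_\eta$ is a linear order on $Z$), but you never close the gap: the argument ends with a speculation that one must route through Proposition \ref{proposition-KS-Coarser-v7:2} and the identification of components with quasi-components. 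That detour is not needed and, as sketched, does not work --- knowing that $Z$ is an intersection of $\mathscr{T}_\eta$-clopen sets does not by itself produce a clopen set containing $Z$ and excluding a given $x\in R$ ``on the correct side'', which is precisely what the proposition is supposed to deliver, so the reasoning would be circular in spirit.

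The missing observation is a one-liner and is the actual content of the paper's closedness argument: fix any $x\in Z$ and check that $Y=(\gets,x]_{\leq_\eta}\cup Z$. Indeed, $Y\setminus Z=L<_\eta Z$ gives $L\subset(\gets,x]_{\leq_\eta}$; conversely, if $y\leq_\eta x$ and $y\notin Z$, then Proposition \ref{proposition-KS-Coarser-v9:1}\ref{item:4} forces $y<_\eta Z$ (the alternative $Z<_\eta y$ would give $x<_\eta y$, contradicting $y\leq_\eta x$ with $y\neq x$), so $y\in L$. Now $(\gets,x]_{\leq_\eta}=X\setminus(x,\to)_{\leq_\eta}$ is $\mathscr{T}_\eta$-closed because $(x,\to)_{\leq_\eta}$ is subbasic open, and $Z$ is $\mathscr{T}_\eta$-closed being a $\mathscr{T}_\eta$-component; hence $Y$ is closed as the union of two closed sets. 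With this inserted in place of step (3), your write-up becomes a complete proof essentially identical to the paper's.
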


\begin{proof}
  The set $Y=\bigcup_{z\in Z}(\leftarrow,z)_{\leq_\eta}$ is
  $\mathscr{T}_\eta$-open.  Moreover, $Z\subset Y$ because $Z$ has no
  last element with respect to $\leq_\eta$. If $y\in X\setminus Z$ and
  $y\leq_\eta z$ for some $z\in Z$, then $y<_\eta Z$ because $Z$ is
  $\mathscr{T}_\eta$-connected, see Proposition
  \ref{proposition-KS-Coarser-v9:1}.  This implies that
  $Y\setminus Z<_\eta Z$. It also implies that
  $Y=(\leftarrow,x]_{\leq_\eta}\cup Z$ for some (any) point $x\in
  Z$. Since both $(\leftarrow,x]_{\leq_\eta}$ and $Z$ are
  $\mathscr{T}_\eta$-closed, so is $Y$.
\end{proof}

We now have the following crucial property of selection topologies.

\begin{lemma}
  \label{lemma-KS-Coarser-v16:1}
  Let $\sigma$ be a weak selection for $X$ such that
  $(X,\mathscr{T}_\sigma)$ is a quasi-king space. Then each
  $\mathscr{T}_\sigma$-component is $\mathscr{T}_\sigma$-compact.
\end{lemma}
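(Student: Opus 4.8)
The plan is to analyse each $\mathscr{T}_\sigma$-component as a linearly ordered space and then apply the quasi-king hypothesis to a clopen neighbourhood of it. Fix a $\mathscr{T}_\sigma$-component $C$. Being $\mathscr{T}_\sigma$-connected, $C$ satisfies the hypotheses of Proposition~\ref{proposition-KS-Coarser-v9:1} with $Z=C$, so $\leq_\sigma$ is a linear order on $C$ and $\mathscr{T}_{\sigma\uhr C}=\mathscr{T}_\sigma\uhr C$ is the open interval topology of $\leq_\sigma$ on $C$; thus $(C,\mathscr{T}_\sigma\uhr C)$ is a connected orderable space. Being connected it has no order gaps, hence is Dedekind complete; so if it has both a $\leq_\sigma$-least and a $\leq_\sigma$-greatest element it is a complete lattice and therefore compact. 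Consequently it suffices to prove that $C$ has a $\leq_\sigma$-maximal element and, symmetrically, a $\leq_\sigma$-minimal one.

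Suppose $C$ has no $\leq_\sigma$-maximal element. By Proposition~\ref{proposition-KS-Coarser-v10:1}, applied with $\eta=\sigma$ and $Z=C$, there is a $\mathscr{T}_\sigma$-clopen set $Y\subset X$ with $C\subset Y$ and $Y\setminus C<_\sigma C$. By Proposition~\ref{proposition-KS-Coarser-v4:1} the clopen subspace $(Y,\mathscr{T}_\sigma\uhr Y)$ is again a quasi-king space, and by Proposition~\ref{proposition-KS-Coarser-v11:1}(i) the restriction $\tau=\sigma\uhr Y$ is a separately continuous weak selection for $(Y,\mathscr{T}_\sigma\uhr Y)$; hence $\tau$ has a quasi $\tau$-king $q\in Y$. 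The key point is that, since $Y\setminus C<_\sigma C$ and $\leq_\sigma$ is linear (hence transitive) on $C$, no element of $C$ is $\leq_\sigma$-below an element of $Y\setminus C$; therefore any chain $x\leq_\sigma y_1\leq_\sigma\dots\leq_\sigma y_n\leq_\sigma q$ with $x\in C$ must stay inside $C$, and forces $q\in C$ together with $x\leq_\sigma q$. Letting $x$ range over $C$ shows that $q$ is a $\leq_\sigma$-maximal element of $C$, contradicting our assumption. Thus $C$ has a $\leq_\sigma$-maximum, and applying the same argument to the reverse weak selection $\sigma^{*}$ (for which $x\leq_{\sigma^{*}}y\iff y\leq_\sigma x$; note $\mathscr{T}_{\sigma^{*}}=\mathscr{T}_\sigma$, so $(X,\mathscr{T}_{\sigma^{*}})$ is the same quasi-king space) yields a $\leq_\sigma$-minimum for $C$. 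Hence $C$ is $\mathscr{T}_\sigma$-compact.

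I expect the main obstacle to be the bookkeeping in the key step: one must check carefully that a $\lll_\tau$-chain issuing from $C$ cannot escape into $Y\setminus C$ (using antisymmetry of $\leq_\sigma$ together with $Y\setminus C<_\sigma C$), and that this really forces the quasi $\tau$-king to sit at the (non-existent) top of $C$ rather than somewhere in $Y\setminus C$. The passage to the reverse selection and the reduction of compactness to the existence of endpoints of the connected orderable space $C$ are routine, but should be spelled out explicitly.
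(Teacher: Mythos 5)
Your proposal is correct and follows essentially the same route as the paper's proof: reduce compactness of the connected orderable component to the existence of $\leq_\sigma$-endpoints, use Proposition~\ref{proposition-KS-Coarser-v10:1} to enclose a component with no maximum in a clopen set $Y$ with $Y\setminus C<_\sigma C$, apply Proposition~\ref{proposition-KS-Coarser-v4:1} to get a quasi-king of $Y$ and derive a contradiction, and handle the minimum via the reverse selection $\sigma^{*}$. The only cosmetic difference is that you obtain the contradiction by showing the quasi-king would have to be a (non-existent) maximum of $C$, whereas the paper shows it cannot be a quasi-king at all; these are equivalent.
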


\begin{proof}
  Take a non-degenerate $\mathscr{T}_\sigma$-component $Z\subset
  X$. Then by Proposition \ref{proposition-KS-Coarser-v9:1},
  $(Z,\mathscr{T}_\sigma\uhr Z)$ is orderable with respect to
  $\leq_\sigma$ being a connected space. Hence, to show that $Z$ is
  $\mathscr{T}_\sigma$-compact, it now suffices to show that it has
  both $\leq_\sigma$-minimal and $\leq_\sigma$-maximal elements. To
  this end, we will use that $\sigma$ determines a unique
  `complementary' selection $\sigma^*:\mathscr{F}_2(X)\to X$, defined
  by $S=\big\{\sigma(S),\sigma^*(S)\big\}$, $S\in
  \mathscr{F}_2(X)$. The important property of $\sigma^*$ is that
  $\mathscr{T}_{\sigma^*}=\mathscr{T}_\sigma$ because
  $\leq_{\sigma^*}$ is reverse to $\leq_\sigma$. Thus, given a weak
  selection $\eta$ for $X$ with $\mathscr{T}_\eta=\mathscr{T}_\sigma$,
  it suffices to show that $Z$ has a $\leq_\eta$-maximal element. To
  see this, assume the contrary that $X$ has a weak selection $\eta$
  with $\mathscr{T}_\eta=\mathscr{T}_\sigma$, but $Z$ has no
  $\leq_\eta$-maximal element. Then by Proposition
  \ref{proposition-KS-Coarser-v10:1}, $Z$ is contained in a
  $\mathscr{T}_\eta$-clopen set $Y$ with $Y\setminus Z<_\eta Z$.
  Using that $\mathscr{T}_\eta=\mathscr{T}_\sigma$, it follows from
  Proposition \ref{proposition-KS-Coarser-v4:1} that
  $(Y,\mathscr{T}_\eta\uhr Y)$ is also a quasi-king space. Moreover,
  $\gamma=\eta\uhr Y$ is a separately continuous weak selection for
  $(Y,\mathscr{T}_\eta\uhr Y)$, hence $Y$ has a quasi $\gamma$-king
  $q\in Y$. Since $Y\setminus Z<_\gamma Z$ and $Z$ has no
  $\leq_\gamma$-maximal element, $q<_\gamma x$ for some $x\in Z$. For
  the same reason, $q<_\gamma y$, for every $y\in Y$ with
  $x\leq_\gamma y$, because $\leq_\gamma$ is a linear order on
  $Z$. Accordingly, $q$ cannot be a quasi $\gamma$-king for $Y$. A
  contradiction.
\end{proof}

Finally, we also have that each $\mathscr{T}_\sigma$-component has a
base of $\mathscr{T}_\sigma$-clopen sets.

\begin{proposition}
  \label{proposition-KS-Coarser-v5:2}
  Let $\sigma$ be a weak selection for $X$ such that
  $(X,\mathscr{T}_\sigma)$ is a quasi-king space. Then each
  $\mathscr{T}_\sigma$-component has a base of clopen sets in
  $(X,\mathscr{T}_\sigma)$.
\end{proposition}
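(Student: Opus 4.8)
The plan is to check the neighbourhood‑base property directly: given a $\mathscr{T}_\sigma$-component $Z$ and a $\mathscr{T}_\sigma$-open set $U\supseteq Z$, I must produce a $\mathscr{T}_\sigma$-clopen set $W$ with $Z\subseteq W\subseteq U$.

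First I would record the structure already available. By Lemma~\ref{lemma-KS-Coarser-v16:1}, $Z$ is $\mathscr{T}_\sigma$-compact; when $Z$ is non-degenerate, Proposition~\ref{proposition-KS-Coarser-v9:1} identifies $\mathscr{T}_\sigma\uhr Z$ with the open interval topology of the linear order $\leq_\sigma$ on $Z$, so the compact orderable space $Z$ has a $\leq_\sigma$-least element $a$ and a $\leq_\sigma$-greatest element $b$, and by Proposition~\ref{proposition-KS-Coarser-v9:1}\ref{item:4} one has $X\setminus Z=(\gets,a)_{\leq_\sigma}\cup(b,\to)_{\leq_\sigma}$, a disjoint union of two $\mathscr{T}_\sigma$-open sets $L$ and $R$, each of which, being a union of connected subsets missing $Z$, is a union of $\mathscr{T}_\sigma$-components. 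Since $X=L\sqcup Z\sqcup R$, it suffices to find a $\mathscr{T}_\sigma$-clopen $W_L\supseteq Z$ with $W_L\cap L\subseteq U$ and, symmetrically, a $\mathscr{T}_\sigma$-clopen $W_R\supseteq Z$ with $W_R\cap R\subseteq U$, for then $W=W_L\cap W_R$ does the job. The degenerate case $Z=\{z\}$ is the same argument with $a=b=z$.

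The core of the proof is the construction of $W_R$ (the left end is dual, obtained by passing to the complementary weak selection $\sigma^*$, for which $\mathscr{T}_{\sigma^*}=\mathscr{T}_\sigma$ and $\leq_{\sigma^*}$ reverses $\leq_\sigma$). Equivalently, $W_R$ is a $\mathscr{T}_\sigma$-clopen set containing $Z$ and disjoint from the $\mathscr{T}_\sigma$-closed set $R\setminus U$. Here the quasi-king hypothesis must supply what compactness of the whole space supplies in the classical setting: by Proposition~\ref{proposition-KS-Coarser-v4:1} every $\mathscr{T}_\sigma$-clopen subset of $X$ is again a quasi-king space, and a lexicographic-sum argument (Proposition~\ref{proposition-KS-Coarser-v12:2}) shows that a quasi-king space admits no infinite clopen partition — one could otherwise index such a partition by $\mathbb{Z}$ and take minima along it to build a separately continuous weak selection with no quasi-king. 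Combining this with the $\mathscr{T}_\sigma$-compactness of $Z$, and using Proposition~\ref{proposition-KS-Coarser-v10:1} to handle those $\mathscr{T}_\sigma$-components inside $R$ that may lack a $\leq_\sigma$-extremal element, one cuts $R$ off from $Z$ by a clopen set at the appropriate level. The main obstacle is exactly this last step: one cannot simply invoke the classical fact that components of a compact Hausdorff space have clopen neighbourhood bases, because $(X,\mathscr{T}_\sigma)$ is not yet known to be compact, and the non-transitivity of $\leq_\sigma$ obstructs naive interval estimates near $a$ and $b$; the clopen set really has to be manufactured from the quasi-king hypothesis through Propositions~\ref{proposition-KS-Coarser-v4:1} and~\ref{proposition-KS-Coarser-v10:1}.
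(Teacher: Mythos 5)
Your reduction to the two one-sided cuts $W_L,W_R$ is sound, but the proof never actually produces them: the sentence ``one cuts $R$ off from $Z$ by a clopen set at the appropriate level'' is precisely the statement to be proved, not a step towards it. Finding a $\mathscr{T}_\sigma$-clopen set containing $Z$ and missing the (possibly infinite) $\mathscr{T}_\sigma$-closed set $R\setminus U$ \emph{is} the clopen neighbourhood-base property on the right side, and none of the tools you cite delivers it. Proposition \ref{proposition-KS-Coarser-v4:1} only tells you that clopen sets you already have are quasi-king; the no-infinite-clopen-partition fact (Proposition \ref{proposition-KS-Coarser-v7:1}) rules out certain partitions but does not manufacture a single new clopen set; and Proposition \ref{proposition-KS-Coarser-v10:1} is actually vacuous in this setting, since by Lemma \ref{lemma-KS-Coarser-v16:1} every $\mathscr{T}_\sigma$-component of a quasi-king $(X,\mathscr{T}_\sigma)$ is compact and hence has both $\leq_\sigma$-extremal elements, so there are no components in $R$ ``lacking an extremal element'' to handle. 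You correctly identify the obstacle (no compactness of $X$, non-transitivity of $\leq_\sigma$) but then leave it standing.

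What closes the gap in the paper is a pair of facts absent from your proposal. First, $(X,\mathscr{T}_\sigma)$ is \emph{rim-finite}: each subbasic ray $(\gets,x)_{\leq_\sigma}$ or $(x,\to)_{\leq_\sigma}$ has boundary contained in $\{x\}$, so finite intersections of subbasic sets form a base of open sets with finite boundaries. Since $Z$ is $\mathscr{T}_\sigma$-compact (Lemma \ref{lemma-KS-Coarser-v16:1}), finitely many such basic sets cover $Z$ inside $V$, and their union $W$ satisfies $Z\subset W\subset V$ with $\partial W$ finite. Second, by Proposition \ref{proposition-KS-Coarser-v7:2} the quasi-components of $(X,\mathscr{T}_\sigma)$ are connected, so $Z$ coincides with its quasi-component; hence each of the finitely many points of $\partial W$ (none of which lies in $Z$) is excluded by some clopen set containing $Z$, and intersecting these finitely many clopen sets with $W$ yields the required clopen $U$ with $Z\subset U\subset W\subset V$. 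The whole point is that rim-finiteness reduces the separation problem from an arbitrary closed set to a finite set, after which the quasi-component property suffices; without that reduction your argument cannot be completed as written.
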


  \begin{proof}
    A space is \emph{rim-finite} if it has a base of open sets whose
    boundaries are finite. Evidently, $(X,\mathscr{T}_\sigma)$ is
    rim-finite. Take a $\mathscr{T}_\sigma$-component $Z$ of $X$, and
    a $\mathscr{T}_\sigma$-open set $V\subset X$ with $Z\subset
    V$. Since $Z$ is $\mathscr{T}_\sigma$-compact (by Lemma
    \ref{lemma-KS-Coarser-v16:1}) and $(X,\mathscr{T}_\sigma)$ is
    rim-finite, there exists $W\in \mathscr{T}_\sigma$ such that
    $Z\subset W\subset V$ and the boundary of $W$ is finite. However,
    by Proposition \ref{proposition-KS-Coarser-v7:2}, $Z$ is also a
    quasi-component of $(X,\mathscr{T}_\sigma)$.  Hence, there exists
    a $\mathscr{T}_\sigma$-clopen set $U\subset X$ with
    $Z\subset U\subset W\subset V$.
\end{proof}

\begin{proof}[Proof of Theorem \ref{theorem-KS-Coarser-v3:1}]
  Let $\sigma$ be a separately continuous weak selection for $X$. Take
  an open cover $\mathscr{U}\subset \mathscr{T}_\sigma$ of $X$, and
  let $\mathscr{U}^F$ be the cover of $X$ consisting of all finite
  unions of elements of $\mathscr{U}$. According to Lemma
  \ref{lemma-KS-Coarser-v16:1} and Proposition
  \ref{proposition-KS-Coarser-v5:2}, $\mathscr{U}^F$ has a clopen
  refinement $\mathscr{V}$. Then by Theorem
  \ref{theorem-KS-Coarser-v1:1}, $\mathscr{V}$ has a finite
  subcover. This implies that $\mathscr{U}$ has a finite
  subcover as well.
\end{proof}

\section{Coarser Pseudocompact Topologies}
\label{sec:coars-pseud-topol}

For simplicity, we shall say that a space $(X,\mathscr{T})$ is
\emph{selection-orderable} if it has a continuous weak selection
$\varphi$ with $\mathscr{T}=\mathscr{T}_\varphi$. The main idea behind
this convention is that for a space $X$ with a properly continuous
weak selection $\varphi$, the space $(X,\mathscr{T}_\varphi)$ is
selection-orderable.\medskip

We now finalise the proof of Theorem \ref{theorem-KS-Coarser-v11:1} by
showing the following general result involving implicitly
pseudocompactness.

\begin{theorem}
  \label{theorem-KS-Coarser-v10:1}
  Each selection-orderable quasi-king space is compact.
\end{theorem}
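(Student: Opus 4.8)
The plan is to prove the contrapositive-flavoured statement by two successive reductions: first, that a selection-orderable quasi-king space $(X,\mathscr{T}_\varphi)$ is pseudocompact, and then that a pseudocompact selection-orderable quasi-king space is compact. Throughout, fix a continuous weak selection $\varphi$ with $\mathscr{T}_\varphi$ equal to the topology of $X$. Since $(X,\mathscr{T}_\varphi)$ is quasi-king, Lemma \ref{lemma-KS-Coarser-v16:1} and Proposition \ref{proposition-KS-Coarser-v5:2} give that every $\mathscr{T}_\varphi$-component is compact and has a base of clopen neighbourhoods, while Proposition \ref{proposition-KS-Coarser-v7:2} identifies components with quasi-components; in particular distinct components are separated by clopen sets, and the decomposition $\mathscr{C}[X]$ is upper semicontinuous with compact blocks. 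I would then record the structural consequences: the quotient map $q\colon X\to Y:=X/\mathscr{C}[X]$ is perfect onto a zero-dimensional Hausdorff space, the clopen subsets of $X$ are precisely the $q$-preimages of the clopen subsets of $Y$, and, as perfect maps preserve compactness and pseudocompactness in both directions, $X$ is compact (resp.\ pseudocompact) if and only if $Y$ is.

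Step 1, pseudocompactness. I would argue by contradiction: if $X$ is not pseudocompact then neither is $Y$, so, being zero-dimensional Hausdorff, $Y$ carries a decreasing sequence $C_1\supseteq C_2\supseteq\cdots$ of nonempty clopen sets with $\bigcap_n C_n=\emptyset$. Setting $A_0:=X\setminus q^{-1}(C_1)$ and $A_n:=q^{-1}(C_n)\setminus q^{-1}(C_{n+1})$ for $n\ge1$, and discarding the empty ones, produces infinitely many pairwise disjoint nonempty clopen sets whose union is $X$, i.e.\ an infinite open partition $\mathscr{D}=\{A_0,A_1,\dots\}$. Let $\gamma$ be the weak selection on $\mathscr{D}$ with $A_0<_\gamma A_1<_\gamma A_2<_\gamma\cdots$ and let $\sigma=\sum_{(\gamma,\Delta\in\mathscr{D})}\varphi\uhr\Delta$ be the associated lexicographical sum; each $\varphi\uhr A_n$ is continuous for $A_n$ (a clopen subspace), so by Proposition \ref{proposition-KS-Coarser-v12:2} the selection $\sigma$ is separately continuous for $X$. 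But any chain $x\le_\sigma y_1\le_\sigma\cdots\le_\sigma y_k$ with $x\in A_n$ keeps $y_1,\dots,y_k$ inside $\bigcup_{m\ge n}A_m$, so no point of $X$ satisfies $x\lll_\sigma(\cdot)$ for every $x\in X$; thus $\sigma$ has no quasi $\sigma$-king, contradicting that $X$ is quasi-king. Hence $X$, and so $Y$, is pseudocompact.

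Step 2, the endgame. Now assume, for contradiction, that $X$ is not compact; then $Y$ is a non-compact, pseudocompact, zero-dimensional Hausdorff space, and moreover every countable clopen cover of $Y$ has a finite subcover (the "successive differences" of a countable clopen cover would otherwise form an infinite clopen, hence locally finite, partition of the pseudocompact space $Y$). What remains is the analogue, for quasi-king spaces, of the Nagao–Shakhmatov theorem that pseudocompact king spaces are compact: this configuration must be excluded. Following their line, I would use feeble compactness of $Y$ to extract, inside the hypothetical non-compact space, a clopen "tower" cofinal in a clopen cover with no finite subcover (such a tower must be uncountable, since countable clopen covers of $Y$ are finitely subcoverable), pull it back to a clopen tower in $X$, and re-sum $\varphi$ lexicographically along it by means of Proposition \ref{proposition-KS-Coarser-v12:2} so that the successive blocks appear in increasing order without a top; the resulting weak selection $\sigma$ would be separately continuous while $\lll_\sigma$ has no maximum, again contradicting the quasi-king property and completing the proof. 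The main obstacle I anticipate is precisely this last construction: pushing the transfinite re-summation through the limit stages while keeping $\sigma$ separately continuous, because at each $\mathscr{T}_\varphi$-limit point the re-ordering is forced to agree locally with $\varphi$; it is here that pseudocompactness — in the form that $X$ admits no infinite clopen partition and that every countable clopen cover of $Y$ is finitely subcoverable — has to be exploited most carefully, and this is the step I would spend the most effort on.
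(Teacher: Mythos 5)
Your overall architecture (first pseudocompactness, then pseudocompactness implies compactness) matches the paper's, but both halves have genuine gaps. In Step 1 the whole reduction rests on the claim that pseudocompactness is an inverse invariant of perfect maps, so that $\mathscr{C}[X]$ pseudocompact would force $X$ pseudocompact. That principle is false, even for monotone perfect maps whose fibres are the (compact, connected) components: take $Y=\Psi(\mathcal{A})$ a pseudocompact Mr\'owka space, fix a countably infinite closed discrete $D=\{A_n:n\in\N\}\subset\mathcal{A}$, and let $X=(Y\times\{0\})\cup(D\times[0,1])\subset Y\times[0,1]$. The components of $X$ are compact, the decomposition space is $Y$ and the quotient map is perfect, yet the sets $\{A_n\}\times(1/2,1]$ form an infinite locally finite family of nonempty open subsets of $X$, so $X$ is not pseudocompact. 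What your Step 1 actually establishes is only that $\mathscr{C}[X]$ is pseudocompact (this is the content of Lemma \ref{lemma-KS-Coarser-v10:1}); transferring this back to $X$ is precisely where the paper does real work. It equips the decomposition space with a continuous weak selection $\mathscr{C}[\varphi]$ (Proposition \ref{proposition-KS-Coarser-v10:4}), upgrades pseudocompactness of $\mathscr{C}[X]$ to \emph{sequential} compactness via the known theorem that pseudocompact spaces with continuous weak selections are sequentially compact, and then shows that a non-pseudocompact $X$ would yield, from an unbounded continuous function together with the compactness of components (Lemma \ref{lemma-KS-Coarser-v16:1}) and their clopen bases (Proposition \ref{proposition-KS-Coarser-v5:2}), a discrete sequence of components --- impossible in a sequentially compact decomposition space. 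Mere pseudocompactness of $\mathscr{C}[X]$ would not rule out such a discrete sequence, so the selection structure on the quotient cannot be bypassed.

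Step 2 is not a proof: you describe a transfinite clopen-tower re-summation and explicitly concede that you cannot push it through the limit stages. Moreover, the construction is unnecessary. Once $X$ is known to be pseudocompact, the paper cites the theorem that every pseudocompact space with a continuous weak selection is suborderable, and then Proposition \ref{proposition-KS-Coarser-v4:2} (every suborderable quasi-king space is compact, already proved in Section \ref{sec:clop-comp-spac}) finishes the argument immediately. So the endgame you were bracing for is a two-line reduction to known results, while the genuinely delicate point of the proof is the one your Step 1 passes over.
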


To prepare for the proof of Theorem \ref{theorem-KS-Coarser-v10:1}, we
first extend the following property of king spaces to the case of
quasi-king spaces, see \cite[Lemma 3.4]{MR2944781}.

\begin{proposition}
  \label{proposition-KS-Coarser-v7:1}
  Let $X$ be a space with a continuous weak selection. If $X$ admits
  an infinite open partition, then $X$ is not a quasi-king space.
\end{proposition}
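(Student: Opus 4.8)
show that if $X$ has a continuous weak selection and admits an infinite open partition, then we can build a continuous weak selection on $X$ that has no quasi-king. The natural candidate is a lexicographical sum patterned on Example~\ref{example-KS-Coarser-v9:2}. Concretely, let $\mathscr{D}=\{\Delta_n:n\in\N\}$ be an infinite open partition of $X$; by Proposition~\ref{proposition-KS-Coarser-v12:2} it suffices to choose a weak selection $\gamma$ on the countable set $\mathscr{D}$ and continuous weak selections $\eta_{\Delta}$ on each block $\Delta$, and then pass to the lexicographical $\gamma$-sum $\sigma=\sum_{(\gamma,\Delta\in\mathscr{D})}\eta_\Delta$, which will be continuous. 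The whole point is to arrange $\gamma$ so that $\mathscr{D}$, with the relation $\lll_\gamma$, has no $\lll_\gamma$-maximal element — i.e.\ $\mathscr{D}$ is not a quasi-king under $\gamma$ — and then to check this lifts to the statement that $X$ has no quasi $\sigma$-king.

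First I would record the reduction to the discrete countable set. If $q\in X$ is a quasi $\sigma$-king, let $\Delta_q\in\mathscr{D}$ be the block containing $q$. Given any $\Gamma\in\mathscr{D}$ and any $x\in\Gamma$, from $x\lll_\sigma q$ and the definition of $\sigma$ (blocks are $\leq_\sigma$-comparable exactly as under $\gamma$, and within a block $\leq_\sigma$ agrees with $\eta_\Delta$) one extracts a chain of blocks witnessing $\Gamma\lll_\gamma \Delta_q$; here I use that once a chain $x\leq_\sigma y_1\leq_\sigma\cdots\leq_\sigma q$ leaves a block it cannot come back below, because $\Gamma<_\sigma\Delta$ whenever $\Gamma<_\gamma\Delta$. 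Hence $\Delta_q$ would be a quasi $\gamma$-king of $\mathscr{D}$. So it is enough to exhibit a weak selection $\gamma$ on a countably infinite set $\mathscr{D}$ with no quasi $\gamma$-king, and moreover to do it so that the resulting $\sigma$ is \emph{continuous} — which, by Proposition~\ref{proposition-KS-Coarser-v12:2}, is automatic as soon as each $\eta_\Delta$ is continuous (and every space with a continuous weak selection has one on each of its open, hence clopen, pieces, or we may even take $\eta_\Delta$ induced by restricting a fixed continuous weak selection for $X$).

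The core combinatorial step is therefore: \emph{a countably infinite set carries a weak selection with no quasi-king.} Enumerate $\mathscr{D}=\{d_0,d_1,d_2,\dots\}$ and define $\leq_\gamma$ by declaring, for $m<n$, that $d_m<_\gamma d_n$ \textbf{unless} $n=m+1$, in which case $d_{m+1}<_\gamma d_m$; equivalently, reverse only the "consecutive" comparisons $d_m$ vs.\ $d_{m+1}$. Then for each $n$ we have $d_n<_\gamma d_{n+2}$, and also $d_n$ is beaten: if $n$ is such that $d_{n}<_\gamma d_{n+1}$ fails we instead use $d_n<_\gamma d_{n+2}$, so in all cases $d_n\ll_\gamma$-precedes something strictly larger. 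More carefully, one checks $d_n <_\gamma d_{n+2}$ always, and $d_{n+2}$ in turn $<_\gamma d_{n+4}$, etc., so no $d_n$ can be $\lll_\gamma$-maximal: for any candidate $d_k$, pick $d_j$ with $j>k+1$ and $d_j\not<_\gamma d_k$ (e.g.\ $d_j$ with $j-k$ even, $j>k$), then $d_j\lll_\gamma d_j$ while $d_j\not\lll_\gamma d_k$ — here one must verify that a $\lll_\gamma$-chain from $d_j$ down to $d_k$ is impossible, which follows because each single $\leq_\gamma$-step can decrease the index by at most $1$ and only via a consecutive reversal, so a chain from $d_j$ to $d_k$ would have to pass through every intermediate $d_i$ consecutively, forcing $d_{i+1}<_\gamma d_i$ for all $k\le i<j$ — but that contradicts $d_k<_\gamma d_{k+2}$-type relations once the chain is forced to be "all consecutive". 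I would clean this up into a short lemma: \emph{there is a weak selection on $\N$ whose relation $\lll$ has no maximal element}, probably by the even cleaner choice $d_m<_\gamma d_n\iff (n-m \text{ is a positive even number, or } m-n=1)$ or simply by breaking $\N$ into triples and gluing three-cycles as in Example~\ref{example-KS-Coarser-v9:2}.

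\textbf{The main obstacle} is precisely this last verification — ensuring that passing from $\leq_\gamma$ to its transitive-closure-like relation $\lll_\gamma$ genuinely destroys all maximal elements, since $\lll_\gamma$ is much coarser than $\leq_\gamma$ and naive "non-transitivity" cycles can accidentally make everything a quasi-king (as in Example~\ref{example-KS-Coarser-v9:1}, where finitely many points always \emph{do} have a quasi-king). The trick is to exploit infiniteness: I want a one-way "escape to infinity" so that from any point one can $\lll_\gamma$-climb strictly upward forever, while no point sits above a cofinal tail. The three-cycle-on-triples construction of Example~\ref{example-KS-Coarser-v9:2}, transported to an infinite partition, does exactly this: within block-triple $T_k=\{d_{3k},d_{3k+1},d_{3k+2}\}$ put a $3$-cycle $d_{3k}<_\gamma d_{3k+1}<_\gamma d_{3k+2}<_\gamma d_{3k}$, and between triples set $T_k<_\gamma T_{k+1}$; then $\lll_\gamma$ makes every element of $T_k$ reach every element of $T_{k+1}$, so there is no $\lll_\gamma$-maximal element, yet the relation restricted to understanding $X$ is harmless because—by the reduction above—it is $\mathscr{D}$'s failure that matters, and $\mathscr{D}$ here has no quasi $\gamma$-king. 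Once the lemma on $\N$ (or on $\mathscr{D}$) is in hand, assembling $\sigma$ via Proposition~\ref{proposition-KS-Coarser-v12:2} and invoking the reduction finishes the proof that $X$ is not quasi-king.
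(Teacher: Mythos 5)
Your overall strategy --- form the lexicographical $\gamma$-sum over the partition via Proposition~\ref{proposition-KS-Coarser-v12:2} and reduce the problem to exhibiting a weak selection $\gamma$ on the index set with no quasi $\gamma$-king --- is exactly the paper's strategy, and your reduction (block indices are non-decreasing along any $\leq_\sigma$-chain, so a quasi $\sigma$-king in $X$ would yield a quasi $\gamma$-king in $\mathscr{D}$) is sound. However, you miss the observation that makes your ``core combinatorial step'' trivial, and the construction you actually work out in detail is wrong. Any infinite set carries a \emph{linear} order $\leq$ with no last element, so one may simply take $\gamma$ with $\leq_\gamma=\leq$; then $\lll_\gamma=\leq_\gamma$ because a linear order is transitive (Proposition~\ref{proposition-KS-Coarser-v9:2}), and the absence of a $\lll_\gamma$-maximal element is immediate. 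This is what the paper does, and it also sidesteps your tacit assumption that the partition is countable --- the hypothesis only says infinite, so if you insist on an enumeration you must first lump all but countably many blocks into a single open piece.

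Your primary construction (``reverse only the consecutive comparisons'' on $\{d_n:n\in\N\}$) genuinely fails: for every $k$ and every $j>k$, the chain $d_j\leq_\gamma d_{j-1}\leq_\gamma\cdots\leq_\gamma d_k$ consists entirely of consecutive reversals $d_{i+1}<_\gamma d_i$, each of which holds by definition, so $d_j\lll_\gamma d_k$; together with $d_i<_\gamma d_{i+2}<_\gamma d_{i+1}$ for $i<k$, this makes \emph{every} $d_k$ a quasi $\gamma$-king. The ``contradiction'' you invoke between the all-consecutive chain and relations like $d_k<_\gamma d_{k+2}$ is not one: $\leq_\gamma$ is not required to be transitive, and those relations concern different pairs. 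Your fallback (three-cycles on triples $T_k$ with $T_k<_\gamma T_j$ for $k<j$) does work, since the triple index is non-decreasing along any $\leq_\gamma$-chain, but it is unnecessary machinery once one notices the linear-order shortcut above.
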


\begin{proof}
  Let $\mathscr{U}$ be an infinite open partition of $X$, and $\leq$
  be a linear order on $\mathscr{U}$ such that $\mathscr{U}$ has no
  last $\leq$-element. Take a weak selection $\gamma$ for
  $\mathscr{U}$ with $\leq_\gamma=\leq$. Also, for every
  $U\in \mathscr{U}$, take a continuous weak selection $\eta_U$ for
  $U$. Finally, let $\sigma$ be the lexicographical $\gamma$-sum of
  these selections. By Proposition \ref{proposition-KS-Coarser-v12:2},
  $\sigma$ is continuous. Moreover, $\sigma$ induces the same linear
  order on $\mathscr{U}$ as that of $\gamma$, see
  \eqref{eq:KS-Coarser-v12:2}. This implies that $\sigma$ has no quasi
  $\sigma$-king. Indeed, let $q\in V$ for some $V\in
  \mathscr{U}$. Next, using that $\mathscr{U}$ has no last
  $\leq_\sigma$-element, take any $U\in \mathscr{U}$ with
  $V<_\sigma U$. If $x\in U$ and $x\leq_\sigma y$, then $y$ has the
  same property as $x$ in the sense that $y\in W$ for some
  $W\in \mathscr{U}$ with $V<_\sigma W$. Hence, for any
  finite number of points $y_1, \dots , y_n\in X$ with
  $x\leq_\sigma y_1\leq_\sigma \dots \leq_\sigma y_n$, we have that
  $q< y_k$ for all $k\leq n$.
\end{proof}

Let $\mathscr{C}[X]=\big\{\mathscr{C}[x]: x\in X\big\}$ be the
decomposition space determined by the components of $X$. Recall that a
subset $\mathscr{U}\subset \mathscr{C}[X]$ is open in $\mathscr{C}[X]$
if $\bigcup\mathscr{U}$ is open in $X$. Alternatively,
$\mathscr{C}[X]$ is the quotient space obtained by the equivalence
relation $x\sim y$ iff $\mathscr{C}[x]=\mathscr{C}[y]$. Since the
elements of $\mathscr{C}[X]$ are closed sets, the decomposition space
$\mathscr{C}[X]$ is a $T_1$-space. The following property of the
decomposition space was essentially established in \cite[Corollary
3.7]{MR3640030}.

\begin{proposition}
  \label{proposition-KS-Coarser-v10:4}
  Let $X$ be a quasi-king space, and $\varphi$ be a continuous weak
  selection for $X$ such that $\mathscr{T}_\varphi$ is the topology of
  $X$. Then for every $x,y\in X$ with $\mathscr{C}[x]\cap
  \mathscr{C}[y]=\emptyset$ and $x<_\varphi y$, there are clopen
  sets $U,V\subset X$ such that $\mathscr{C}[x]\subset U$,
  $\mathscr{C}[y]\subset V$ and $U<_\varphi V$.
\end{proposition}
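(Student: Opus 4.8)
The plan is to reduce the assertion about the decomposition space $\mathscr{C}[X]$ to an earlier-proven statement, essentially \cite[Corollary 3.7]{MR3640030}, and then to upgrade it to a quasi-king setting using the tools developed in the previous sections. Specifically, since $\mathscr{T}_\varphi$ is the topology of $X$ and $\varphi$ is continuous, we know from \cite[Theorem 4.1]{gutev-nogura:00b} that $\mathscr{C}[x]=\mathscr{Q}[x]$ for every $x\in X$, so components and quasi-components coincide. Thus a point lies outside $\mathscr{C}[x]$ precisely when it can be separated from $\mathscr{C}[x]$ by a clopen set. Combined with Proposition \ref{proposition-KS-Coarser-v9:1}\ref{item:4}, applied to the connected set $\mathscr{C}[x]$, we get that $z\notin \mathscr{C}[x]$ if and only if $z<_\varphi \mathscr{C}[x]$ or $\mathscr{C}[x]<_\varphi z$, and similarly for $\mathscr{C}[y]$.

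Next I would argue that the two components sit on opposite sides of each other in the order: from $x<_\varphi y$ together with $\mathscr{C}[x]\cap\mathscr{C}[y]=\emptyset$ and the dichotomy above, one derives $\mathscr{C}[x]<_\varphi \mathscr{C}[y]$ (the alternative $\mathscr{C}[y]<_\varphi \mathscr{C}[x]$ would force $y<_\varphi x$). Now consider the quotient map $q\colon X\to \mathscr{C}[X]$. The key point is to transport the weak selection $\varphi$ down to a weak selection $\bar\varphi$ on $\mathscr{C}[X]$: define $\bar\varphi(\{\mathscr{C}[x],\mathscr{C}[y]\})=\mathscr{C}[\varphi(\{x,y\})]$ for disjoint components, which is well-defined precisely because distinct components are $\leq_\varphi$-comparable as whole sets (every element of one precedes every element of the other). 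One checks that $\bar\varphi$ is continuous with respect to the quotient topology: preimages of $\leq_{\bar\varphi}$-intervals pull back to unions of the form $\bigcup_{z}(\gets,z)_{\leq_\varphi}$, which are $\mathscr{T}_\varphi$-open and saturated, hence open in $\mathscr{C}[X]$. Moreover $\mathscr{C}[X]$ inherits the quasi-king property, since any separately continuous weak selection on $\mathscr{C}[X]$ lifts to one on $X$ via composition with $q$, whose quasi $\sigma$-king projects to a quasi $\bar\sigma$-king on $\mathscr{C}[X]$.

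Having arranged this, $\mathscr{C}[X]$ is a totally disconnected (in fact its points are closed and it has no non-degenerate connected subsets, since components of $X$ are mapped to points) quasi-king space with a continuous weak selection $\bar\varphi$, so by \cite[Corollary 3.7]{MR3640030} — or by re-running the argument of Proposition \ref{proposition-KS-Coarser-v4:1} together with the separation available in totally disconnected spaces — there are clopen sets $\bar U, \bar V$ in $\mathscr{C}[X]$ with $\mathscr{C}[x]\in \bar U$, $\mathscr{C}[y]\in \bar V$, and $\bar U<_{\bar\varphi}\bar V$. Pulling back, $U=q^{-1}(\bar U)$ and $V=q^{-1}(\bar V)$ are clopen in $X$, contain $\mathscr{C}[x]$ and $\mathscr{C}[y]$ respectively, and satisfy $U<_\varphi V$ because order comparisons between points in different components agree with the induced order on $\mathscr{C}[X]$.

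The main obstacle I anticipate is verifying that the quasi-king property genuinely descends to $\mathscr{C}[X]$ and, dually, that the separating clopen sets obtained downstairs reflect the $\leq_\varphi$-order faithfully; this hinges on the fact that $\leq_\varphi$ restricted to any single component plays no role (it is "collapsed"), so that whenever $x'\in\mathscr{C}[x]$ and $y'\in\mathscr{C}[y]$ with distinct components one has $x'<_\varphi y'$ iff $\mathscr{C}[x]<_{\bar\varphi}\mathscr{C}[y]$ — this uniformity is exactly what Proposition \ref{proposition-KS-Coarser-v9:1}\ref{item:4} provides, but care is needed because $\leq_\varphi$ is not transitive in general, so one must phrase everything in terms of set-comparisons $A<_\varphi B$ rather than chains. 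If the reduction to \cite[Corollary 3.7]{MR3640030} turns out to require hypotheses not literally available here, the fallback is a direct argument: take $U=X\setminus\bigcup\{\mathscr{C}[z]: \mathscr{C}[z]>_\varphi \mathscr{C}[x], \ \mathscr{C}[z]\text{ separated from }\mathscr{C}[x]\}$ suitably truncated, using Lemma \ref{lemma-KS-Coarser-v16:1} applied in $(X,\mathscr{T}_\varphi)$ to guarantee the relevant components have extremal elements, and then refine to a clopen set via Proposition \ref{proposition-KS-Coarser-v5:2}.
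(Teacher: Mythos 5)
Your opening reduction is fine: the dichotomy of Proposition \ref{proposition-KS-Coarser-v9:1}\ref{item:4} applied to the $\mathscr{T}_\varphi$-connected sets $\mathscr{C}[x]$ and $\mathscr{C}[y]$ does give $\mathscr{C}[x]<_\varphi\mathscr{C}[y]$, exactly as in the paper. The problem is everything after that. Your plan hinges on the induced selection $\bar\varphi$ on the decomposition space $\mathscr{C}[X]$ being \emph{continuous}, but the justification you offer (preimages of subbasic $\leq_{\bar\varphi}$-intervals are open and saturated) only establishes \emph{separate} continuity; Vietoris continuity of $\bar\varphi$ is precisely the assertion that two $\leq_{\bar\varphi}$-comparable points of $\mathscr{C}[X]$ can be put into open sets $\mathscr{U}<_{\bar\varphi}\mathscr{V}$, which (pulled back through the quotient map) is essentially the statement you are trying to prove. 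Indeed, in the paper the continuity of $\mathscr{C}[\varphi]$ is \emph{deduced from} this proposition in Lemma \ref{lemma-KS-Coarser-v10:1}, so your route is circular at its crux. The auxiliary claims are also shaky: lifting a separately continuous weak selection from $\mathscr{C}[X]$ back to $X$ ``via composition with $q$'' does not define it on pairs inside a component, and the natural fix (a lexicographical sum over the components) is not covered by Proposition \ref{proposition-KS-Coarser-v12:2}, since the partition into components need not be open. The fallback construction of $U$ by deleting components is too vague to verify (that union of components need not be closed, and ``suitably truncated'' is carrying the whole burden).

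The missing idea is a direct compactness argument that never leaves $X$. Since $(X,\mathscr{T}_\varphi)$ is a quasi-king space with $\mathscr{T}_\varphi$ equal to its topology, Lemma \ref{lemma-KS-Coarser-v16:1} makes both components compact. Vietoris continuity of $\varphi$ then gives, for each $z\in\mathscr{C}[x]$ and each $w\in\mathscr{C}[y]$, order-separated open neighbourhoods; compactness of $\mathscr{C}[y]$ (a tube-lemma step) yields open $U_z\ni z$ and $V_z\supset\mathscr{C}[y]$ with $U_z<_\varphi V_z$, and compactness of $\mathscr{C}[x]$ then produces open $U=\bigcup_{z\in S}U_z$ and $V=\bigcap_{z\in S}V_z$ for a finite $S$ with $U<_\varphi V$. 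Finally, Proposition \ref{proposition-KS-Coarser-v5:2} shrinks $U$ and $V$ to clopen sets still containing the respective components, which preserves the relation $U<_\varphi V$. That is the paper's proof; without this (or an equivalent non-circular substitute for the continuity of $\bar\varphi$), your argument does not go through.
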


\begin{proof}
  Since $x<_\varphi y$, by Proposition
  \ref{proposition-KS-Coarser-v9:1}, we get that
  $\mathscr{C}[x]<_\varphi \mathscr{C}[y]$. Then the existence of such
  clopen sets $U,V\subset X$ follows by applying Lemma
  \ref{lemma-KS-Coarser-v16:1} and the condition that
  $\mathscr{T}_\varphi$ is the topology of $X$. Namely, by
  Proposition \ref{proposition-KS-Coarser-v5:2}, it suffices to
  construct open sets $U,V\subset X$ with $\mathscr{C}[x]\subset U$,
  $\mathscr{C}[y]\subset V$ and $U<_\varphi V$. Since $\mathscr{C}[y]$
  is compact (by Lemma \ref{lemma-KS-Coarser-v16:1}) and $\varphi$ is
  continuous, for each $z\in \mathscr{C}[x]$ there are open sets
  $U_z,V_z\subset X$ such that $z\in U_z$, $\mathscr{C}[y]\subset V_z$
  and $U_z<_\varphi V_z$. Finally, since $\mathscr{C}[x]$ is also
  compact, there exists a finite set $S\subset \mathscr{C}[x]$ with
  $\mathscr{C}[x]\subset \bigcup_{z\in S}U_z$. Then $U=\bigcup_{z\in
    S}U_z$ and $V=\bigcap_{z\in S}V_z$ are as required.
\end{proof}

The crucial final step in the preparation for the proof of Theorem
\ref{theorem-KS-Coarser-v10:1} is the following result.

\begin{lemma}
  \label{lemma-KS-Coarser-v10:1}
  Let $X$ be a selection-orderable quasi-king space.  Then the
 decomposition space $\mathscr{C}[X]$ is a zero-dimensional
 sequentially compact space.
\end{lemma}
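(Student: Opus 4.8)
The plan is to fix a continuous weak selection $\varphi$ with $\mathscr{T}_\varphi$ the topology of $X$, and to analyse the decomposition space $\mathscr{C}[X]$ through the order-like structure that $\varphi$ induces on it. Since $X$ is quasi-king and $\mathscr{T}_\varphi$ is its topology, Lemma \ref{lemma-KS-Coarser-v16:1} tells us every $\mathscr{T}_\varphi$-component is compact; and by \cite[Theorem 4.1]{gutev-nogura:00b} the components and quasi-components of $X$ coincide, so the $\mathscr{T}_\varphi$-components are exactly the sets $\mathscr{C}[x]$, each of which is compact and, by Proposition \ref{proposition-KS-Coarser-v9:1}, a compact orderable space under $\leq_\varphi$. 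First I would use Proposition \ref{proposition-KS-Coarser-v9:1}\ref{item:4} to check that $\varphi$ descends to a well-defined weak selection $\gamma$ on $\mathscr{C}[X]$: for distinct components $C,D$ we have either $C<_\varphi D$ or $D<_\varphi C$, so set $C\leq_\gamma D$ accordingly. Proposition \ref{proposition-KS-Coarser-v10:4} then says precisely that $\gamma$ is continuous on $\mathscr{C}[X]$ (open sets in $\mathscr{C}[X]$ pull back to clopen, hence open, sets witnessing the separation), and that $\mathscr{C}[X]$ has a base of clopen sets — i.e.\ it is zero-dimensional. Moreover $\mathscr{T}_\gamma$ is the topology of $\mathscr{C}[X]$, since $\leq_\gamma$-intervals are exactly images of $\leq_\varphi$-intervals under the (open) quotient map, so $\mathscr{C}[X]$ is itself selection-orderable via $\gamma$.

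Next I would observe that $\mathscr{C}[X]$ is again a quasi-king space. Any separately continuous weak selection on $\mathscr{C}[X]$ pulls back (composing with the quotient map and keeping $\varphi$ inside components) to a separately continuous weak selection on $X$ — this is a lexicographical-sum construction in the spirit of Proposition \ref{proposition-KS-Coarser-v12:2}, using that components are $\mathscr{T}_\varphi$-clopen in the relevant sense — and a quasi $\sigma$-king of $X$ projects to a quasi-king of $\mathscr{C}[X]$; here the compactness of each component (Lemma \ref{lemma-KS-Coarser-v16:1}) is what lets a finite $\leq_\sigma$-chain within a component be collapsed to its endpoint. Alternatively, and more cleanly, $\mathscr{C}[X]$ is a $T_1$ space that is totally disconnected and hereditarily a quotient of clopen pieces, so Proposition \ref{proposition-KS-Coarser-v4:1} applies to its clopen subsets; either way one concludes $\mathscr{C}[X]$ is a zero-dimensional, totally disconnected, selection-orderable quasi-king space.

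It then remains to prove sequential compactness. Let $(C_n)$ be a sequence in $\mathscr{C}[X]$; I want a convergent subsequence. Since $\gamma$ is a continuous weak selection on the totally disconnected space $\mathscr{C}[X]$, the order $\leq_\gamma$ need not be linear, but we may pass to the $\lll_\gamma$-structure: by Theorem \ref{theorem-KS-Coarser-v4:1} applied after building, via Proposition \ref{proposition-KS-Coarser-v12:2}, an auxiliary weak selection that threads $(C_n)$ monotonically — i.e.\ realise a suitable clopen partition refining the $C_n$'s with a prescribed order type on the index set — and then invoke Proposition \ref{proposition-KS-Coarser-v7:1}: an infinite open partition would destroy the quasi-king property. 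Concretely, if no subsequence of $(C_n)$ converged, one could separate infinitely many of the $C_n$ by clopen sets and assemble an infinite clopen partition of $\mathscr{C}[X]$ (using zero-dimensionality and the $T_1$ property to split off each $C_n$), contradicting Proposition \ref{proposition-KS-Coarser-v7:1}. Hence some subsequence converges, and $\mathscr{C}[X]$ is sequentially compact.

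The main obstacle will be the sequential-compactness step: turning "failure to have a convergent subsequence" into a genuine \emph{infinite open partition} requires care, because $\mathscr{C}[X]$ is only known to be zero-dimensional and $T_1$, not compact (that is what the next section proves), so one cannot simply quote a countable-compactness argument. The trick is to use that a non-converging sequence in a rim-finite zero-dimensional space, after passing to a subsequence, can be surrounded by pairwise-disjoint clopen sets — then adjoin the complement to form the partition — and the quasi-king hypothesis, via Proposition \ref{proposition-KS-Coarser-v7:1}, forbids this. Packaging this cleanly, rather than the zero-dimensionality (which is essentially Proposition \ref{proposition-KS-Coarser-v10:4}), is where the real work lies.
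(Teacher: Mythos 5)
Your construction of the continuous weak selection $\mathscr{C}[\varphi]$ on the decomposition space and your derivation of zero-dimensionality from Propositions \ref{proposition-KS-Coarser-v5:2} and \ref{proposition-KS-Coarser-v10:4} match the paper's proof. But the final step --- sequential compactness --- contains a genuine gap, and it is precisely the step you flag as ``where the real work lies.'' Your plan is to turn a sequence with no convergent subsequence into an infinite open (clopen) partition and then invoke Proposition \ref{proposition-KS-Coarser-v7:1}. This cannot work as stated: the property that every discrete family of nonempty open sets is finite (equivalently, for Tychonoff spaces, pseudocompactness) does \emph{not} imply sequential compactness, and the passage from ``no convergent subsequence'' to ``infinite open partition'' fails in general. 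In $\beta\omega$, for instance, the sequence of points of $\omega$ admits pairwise disjoint clopen neighbourhoods $\{n\}$, yet their union $\omega$ is not closed, so adjoining the complement does not produce an open partition --- and indeed $\beta\omega$ is compact and zero-dimensional but not sequentially compact. The missing ingredient, which the paper uses and you never invoke, is the nontrivial external theorem that every \emph{pseudocompact space with a continuous weak selection} is sequentially compact \cite{artico-marconi-pelant-rotter-tkachenko:02, garcia-ferreira-sanchis:04, miyazaki:01b}. The paper's route is: show $\mathscr{C}[X]$ is pseudocompact (any infinite discrete family of open sets in $\mathscr{C}[X]$ lifts to an infinite discrete family of clopen sets in $X$, whose union has clopen complement, giving an infinite open partition of $X$ that contradicts Proposition \ref{proposition-KS-Coarser-v7:1} applied to $X$), and only then apply the cited theorem to the continuous weak selection $\mathscr{C}[\varphi]$ you already built.

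A secondary problem: you apply Proposition \ref{proposition-KS-Coarser-v7:1} to $\mathscr{C}[X]$, which requires knowing that $\mathscr{C}[X]$ is itself a quasi-king space. Your proposed justification --- pulling back a separately continuous weak selection on $\mathscr{C}[X]$ to one on $X$ via a lexicographical sum over the partition into components --- does not go through, because Proposition \ref{proposition-KS-Coarser-v12:2} requires the partition $\mathscr{D}$ to be \emph{open}, and components are closed but generally not open. The paper sidesteps this entirely by applying Proposition \ref{proposition-KS-Coarser-v7:1} to $X$ itself, using discreteness of the lifted family to make the complementary piece $U_0=X\setminus\bigcup_{n}U_n$ clopen. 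If you repair the argument along these lines (pseudocompactness of $\mathscr{C}[X]$ via a partition of $X$, then the pseudocompact-plus-selection theorem), your proof becomes the paper's proof.
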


\begin{proof}
  In this proof, we first show that $\mathscr{C}[X]$ has a continuous
  weak selection (following \cite[Theorem 3.1]{MR3640030}), and next
  that it is pseudocompact (following \cite[Theorem
  3.5]{MR2944781}). To this end, let $\varphi$ be a continuous weak
  selection for $X$ such that $\mathscr{T}_\varphi$ is the topology of
  $X$.  By Proposition \ref{proposition-KS-Coarser-v5:2}, each element
  of $\mathscr{C}[X]$ has a base of clopen sets. Hence, the
  decomposition space $\mathscr{C}[X]$ is zero-dimensional. Moreover,
  for every $x,y\in X$ with
  $\mathscr{C}[x]\cap \mathscr{C}[y]=\emptyset$ and $x<_\varphi y$,
  just as in the previous proof, we have that
  $\mathscr{C}[x]<_\varphi \mathscr{C}[y]$. Therefore, this defines a
  weak selection $\mathscr{C}[\varphi]$ for $\mathscr{C}[X]$ such that
  $\mathscr{C}[x]<_{\mathscr{C}[\varphi]} \mathscr{C}[y]$, whenever
  ${x<_\varphi y}$ with
  ${\mathscr{C}[x]\cap \mathscr{C}[y]=\emptyset}$. Finally, according
  to Proposition \ref{proposition-KS-Coarser-v10:4}, the selection
  $\mathscr{C}[\varphi]$ is continuous.\smallskip

  To show that $X$ is pseudocompact, take a discrete family
  $\{\mathscr{V}_n:n\in\N\}$ of nonempty open sets
  ${\mathscr{V}_n\subset \mathscr{C}[X]}$. Since $\mathscr{C}[X]$ is
  zero-dimensional, each $\mathscr{V}_n$, $n\in\N$, contains a
  nonempty clopen subset $\mathscr{U}_n\subset \mathscr{C}[X]$. Then
  each $U_n=\bigcup\mathscr{U}_n$, $n\in \N$, is a nonempty clopen
  subset of $X$, and the family ${\{U_n:n\in\N\}}$ is discrete in
  $X$. Hence, ${U_0=X\setminus\bigcup_{n\in\N}U_n}$ is also a clopen
  subset of $X$, and ${\{U_n:n<\omega\}}$ is an infinite pairwise
  disjoint open cover of $X$. According to Proposition
  \ref{proposition-KS-Coarser-v7:1}, this is impossible because $X$ is
  a quasi-king space. Thus, every discrete family of open subsets of
  $\mathscr{C}[X]$ is finite. Since $\mathscr{C}[X]$ is a Tychonoff
  space (being zero-dimensional), this implies that it is
  pseudocompact.\smallskip

  Having already established this, we can use each pseudocompact space
  with a continuous weak selection is sequentially compact
  \cite{artico-marconi-pelant-rotter-tkachenko:02,
    garcia-ferreira-sanchis:04, miyazaki:01b}, see also
  \cite[Corollary 3.9]{gutev-2013springer}. Accordingly,
  $\mathscr{C}[X]$ is sequentially compact.
\end{proof}

We now have also the proof of Theorem \ref{theorem-KS-Coarser-v10:1}.

\begin{proof}[Proof of Theorem \ref{theorem-KS-Coarser-v10:1}]
  Each pseudocompact space $X$ with a continuous weak selection is
  suborderable, see \cite{artico-marconi-pelant-rotter-tkachenko:02,
    garcia-ferreira-sanchis:04, miyazaki:01b}; also \cite[Theorems 3.7
  and 3.8]{gutev-2013springer}. Moreover, by Proposition
  \ref{proposition-KS-Coarser-v4:2}, each suborderable quasi-king
  space is compact. Hence, it suffices to show that $X$ is
  pseudocompact. In this, we follow the proof of \cite[Theorem
  4.1]{MR3640030}. Namely, assume to the contrary that $X$ is not
  pseudocompact. Then it has a continuous unbounded function
  $g:X\to [0,+\infty)$. Take a point $x_1\in X$ with $g(x_1)\geq 1$,
  and let $K_1=\mathscr{C}[x_1]$ be the component of $x_1$. Since $X$
  is a selection-orderable quasi-king space, by Lemma
  \ref{lemma-KS-Coarser-v16:1}, $K_1$ is compact, and consequently
  $g\uhr K_1$ is bounded. Hence, there exists a point
  $x_2\in X\setminus K_1$ with $g(x_2)\geq 2$. Set
  $K_2=\mathscr{C}[x_2]$ and extend the arguments by induction. Thus,
  there exists a pairwise disjoint sequence $\{K_n:n\in\N\}$ of
  components of $X$ and points $x_n\in K_n$ with $g(x_n)\geq n$, for
  every $n\in\N$. We claim that the sequence $\{K_n:n\in\N\}$ is
  discrete in $X$. Indeed, suppose that
  $y\in\overline{\bigcup_{n\geq k}K_n}\setminus\bigcup_{n\geq k}K_n$
  for some $k\in\N$. Since $\mathscr{C}[y]$ is compact,
  $g\uhr \mathscr{C}[y]$ is bounded, and so is $g\uhr U$ for some
  neighbourhood $U$ of $\mathscr{C}[y]$.  By Proposition
  \ref{proposition-KS-Coarser-v5:2}, this implies that $g\uhr H$ is
  bounded for some clopen subset $H\subset X$ with
  $\mathscr{C}[y]\subset H\subset U$.  However,
  $y\in H\cap \overline{\bigcup_{n\geq k}K_n}$ and, therefore, $H$
  meets infinitely many terms of the sequence $\{K_n:n\geq k\}$. In
  fact, being a clopen set, $H$ must contain infinitely many terms of
  this sequence because $ K_n\subset H$, whenever
  $H\cap K_n\neq \emptyset$.  Hence, $g\uhr H$ must be also unbounded
  because $g(x_n)\geq n$ for every $n\in\N$. A contradiction. Thus,
  $\{K_n:n\in\N\}$ is discrete.\smallskip

  We complete the proof as follows. Since
  $\{K_n:n\in \N\}\subset \mathscr{C}[X]$ is discrete in $X$, by
  Proposition \ref{proposition-KS-Coarser-v5:2}, it defines a discrete
  sequence of elements in the decomposition space
  $\mathscr{C}[X]$. However, this is impossible because, by Lemma
  \ref{lemma-KS-Coarser-v10:1}, the decomposition space
  $\mathscr{C}[X]$ is sequentially compact. We have duly arrived at a
  contradiction, showing that $X$ must be pseudocompact. 
\end{proof}

The proof of Theorem \ref{theorem-KS-Coarser-v11:1} now follows from
Theorem \ref{theorem-KS-Coarser-v3:1} and the following consequence of
Theorem \ref{theorem-KS-Coarser-v10:1}. 

\begin{corollary}
  \label{corollary-KS-Coarser-v11:1}
    Let $X$ be a quasi-king space with a properly continuous weak
  selection. Then  $X$ is weakly orderable. 
\end{corollary}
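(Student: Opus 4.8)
The plan is to transfer the problem to the coarser selection topology $\mathscr{T}_\varphi$ associated with a properly continuous weak selection $\varphi$ for $X$: I will show that $(X,\mathscr{T}_\varphi)$ is a selection-orderable quasi-king space, apply Theorem \ref{theorem-KS-Coarser-v10:1} to deduce that it is compact, and then invoke the van Mill--Wattel theorem \cite[Theorem 1.1]{mill-wattel:81} that a compact space carrying a continuous weak selection is orderable. Since $\mathscr{T}_\varphi$ is a coarser topology on $X$, an orderable $\mathscr{T}_\varphi$ witnesses that $X$ is weakly orderable, which is exactly what is claimed.

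First I would fix a properly continuous weak selection $\varphi$ for $X$. By definition this means $\mathscr{T}_\varphi\subset \mathscr{T}$ and $\varphi$ is continuous with respect to $\mathscr{T}_\varphi$; as noted just before Theorem \ref{theorem-KS-Coarser-v10:1}, the space $(X,\mathscr{T}_\varphi)$ is therefore selection-orderable, with $\varphi$ itself as the witnessing continuous weak selection whose selection topology is $\mathscr{T}_\varphi$.

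The key step is to check that $(X,\mathscr{T}_\varphi)$ is a quasi-king space. That it admits a separately continuous weak selection is clear: $\varphi$ is one, by Proposition \ref{proposition-KS-Coarser-v11:1}. For the defining property, I would take an arbitrary separately continuous weak selection $\sigma$ for $(X,\mathscr{T}_\varphi)$, which by definition means $\mathscr{T}_\sigma\subset \mathscr{T}_\varphi$; combining this with $\mathscr{T}_\varphi\subset \mathscr{T}$ gives $\mathscr{T}_\sigma\subset \mathscr{T}$, so $\sigma$ is a separately continuous weak selection for the original space $(X,\mathscr{T})$. Since $X$ is a quasi-king space, $\sigma$ has a quasi $\sigma$-king, and hence $(X,\mathscr{T}_\varphi)$ is quasi-king. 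This amounts to unwinding the definition of separate continuity as an inclusion of selection topologies, so I do not anticipate any real difficulty here; the only thing requiring care is keeping the chain of inclusions $\mathscr{T}_\sigma\subset \mathscr{T}_\varphi\subset \mathscr{T}$ pointing in the correct direction when transporting separate continuity from the coarser space back to $X$.

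With $(X,\mathscr{T}_\varphi)$ established as a selection-orderable quasi-king space, Theorem \ref{theorem-KS-Coarser-v10:1} yields that it is compact. Then $\varphi$ is a continuous weak selection for the compact space $(X,\mathscr{T}_\varphi)$, so by \cite[Theorem 1.1]{mill-wattel:81} that space is orderable. Consequently $\mathscr{T}_\varphi$ is a coarser orderable topology on $X$, and therefore $X$ is weakly orderable, completing the argument. The whole proof is a short composition of results already in hand, and I expect the ``main obstacle'' to be merely the bookkeeping in the previous paragraph rather than anything substantive.
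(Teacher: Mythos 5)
Your proposal is correct and follows exactly the paper's own argument: pass to the coarser selection topology $\mathscr{T}_\varphi$, observe that $(X,\mathscr{T}_\varphi)$ remains quasi-king (which you spell out via the inclusion $\mathscr{T}_\sigma\subset\mathscr{T}_\varphi\subset\mathscr{T}$, the same reasoning the paper uses in Corollary \ref{corollary-KS-Coarser-v4:3}), apply Theorem \ref{theorem-KS-Coarser-v10:1} to get compactness, and finish with van Mill--Wattel. No differences of substance.
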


\begin{proof}
  Let $\varphi$ be a properly continuous weak selection for $X$. Then
  $\varphi$ is continuous with respect to its selection topology
  $\mathscr{T}_\varphi$. Moreover, $\mathscr{T}_\varphi$ is a coarser
  topology on $X$. Hence, $(X,\mathscr{T}_\varphi)$ remains a
  quasi-king space. Thus, $(X,\mathscr{T}_\varphi)$ is a
  selection-orderable quasi-king space and by Theorem
  \ref{theorem-KS-Coarser-v10:1}, it is compact. Finally, by a result
  of van Mill and Wattel \cite[Theorem 1.1]{mill-wattel:81},
  $(X,\mathscr{T}_\varphi)$ is an orderable space. Therefore, $X$ is
  weakly orderable.
\end{proof}

%%%\bibliographystyle{amsplain-ab} 
%%%\bibliography{gutev}

\begin{thebibliography}{10}

\bibitem{artico-marconi-pelant-rotter-tkachenko:02}
G.~Artico, U.~Marconi, J.~Pelant, L.~Rotter, and M.~Tkachenko, \emph{Selections
  and suborderability}, Fund. Math. \textbf{175} (2002), 1--33.

\bibitem{cech:66}
E.~{\v{C}}ech, \emph{Topological spaces}, Revised edition by Zden\v ek Frol\'\i
  k and Miroslav Kat\v etov. Scientific editor, Vlastimil Pt\'ak. Editor of the
  English translation, Charles O. Junge, Publishing House of the Czechoslovak
  Academy of Sciences, Prague, 1966.

\bibitem{garcia-ferreira-sanchis:04}
S.~Garc{\'\i}a-Ferreira and M.~Sanchis, \emph{Weak selections and
  pseudocompactness}, Proc. Amer. Math. Soc. \textbf{132} (2004), 1823--1825.

\bibitem{garcia-tomita:08}
S.~Garc{\'{\i}}a-Ferreira and A.~H. Tomita, \emph{A non-normal topology
  generated by a two-point selection}, Topology Appl. \textbf{155} (2008),
  no.~10, 1105--1110.

\bibitem{gutev:07b}
V.~Gutev, \emph{Orderability in the presence of local compactness}, J. Math.
  Soc. Japan \textbf{60} (2008), no.~3, 741--766.

\bibitem{gutev-2013springer}
\bysame, \emph{Selections and hyperspaces}, Recent progress in general
  topology, {III} (K.~P. Hart, J.~van Mill, and P.~Simon, eds.), Atlantis
  Press, Springer, 2014, pp.~535--579.

\bibitem{MR3430989}
\bysame, \emph{Selection topologies}, Topology Appl. \textbf{196} (2015),
  part B, 458--467.

\bibitem{MR3640030}
\bysame, \emph{King spaces and compactness}, Acta Math. Hungar. \textbf{152}
  (2017), no.~1, 1--10.


\bibitem{gutev-nogura:01a}
V.~Gutev and T.~Nogura, \emph{Selections and order-like relations}, Appl. Gen.
  Topol. \textbf{2} (2001), 205--218.

\bibitem{gutev-nogura:00b}
\bysame, \emph{Vietoris continuous selections and disconnectedness-like
  properties}, Proc. Amer. Math. Soc. \textbf{129} (2001), 2809--2815.

\bibitem{gutev-nogura:03b}
\bysame, \emph{A topology generated by selections}, Topology Appl. \textbf{153}
  (2005), 900--911.

\bibitem{gutev-nogura:09a}
\bysame, \emph{Weak orderability of topological spaces}, Topology Appl.
  \textbf{157} (2010), 1249--1274.

\bibitem{gutev-tomita:07}
V.~Gutev and A.~H. Tomita, \emph{Selections generating new topologies}, Pub.
  Mat. \textbf{51} (2007), 3--15.

\bibitem{haar-konig:10}
A.~Haar and D.~K{\"o}nig, \emph{{\"U}ber einfach geordnete {M}engen}, J.
  f{\"u}r die riene und engew. Math \textbf{139} (1910), 16--28.

\bibitem{hrusak-martinez:09}
M.~Hru{\v s}{\'a}k and I.~Mart{\'{\i}}nez-Ruiz, \emph{Selections and weak
  orderability}, Fund. Math. \textbf{203} (2009), 1--20.

\bibitem{hrusak-martinez:09b}
\bysame, \emph{Spaces determined by selections}, Topology Appl. \textbf{157}
  (2010), 1448--1453.

\bibitem{MR0041412}
H.~G. Landau, \emph{On dominance relations and the structure of animal
  societies. {I}. {E}ffect of inherent characteristics}, Bull. Math. Biophys.
  \textbf{13} (1951), 1--19.

\bibitem{MR567954}
S.~B. Maurer, \emph{The king chicken theorems}, Math. Mag. \textbf{53} (1980),
  no.~2, 67--80, A flock of results about pecking orders, describing possible
  patterns of dominance.

\bibitem{michael:51}
E.~Michael, \emph{Topologies on spaces of subsets}, Trans. Amer. Math. Soc.
  \textbf{71} (1951), 152--182.

\bibitem{mill-wattel:81}
J.~v. Mill and E.~Wattel, \emph{Selections and orderability}, Proc. Amer. Math.
  Soc. \textbf{83} (1981), no.~3, 601--605.

\bibitem{miyazaki:01b}
K.~Miyazaki, \emph{Continuous selections on almost compact spaces}, Sci. Math.
  Jpn. \textbf{53} (2001), 489--494.

\bibitem{MR2944781}
M.~Nagao and D.~Shakhmatov, \emph{On the existence of kings in continuous
  tournaments}, Topology Appl. \textbf{159} (2012), no.~13, 3089--3096.

\bibitem{MR1795166}
S.~Purisch, \emph{A history of results on orderability and suborderability},
  Handbook of the history of general topology, {V}ol.\ 2 ({S}an {A}ntonio,
  {TX}, 1993), Hist. Topol., vol.~2, Kluwer Acad. Publ., Dordrecht, 1998,
  pp.~689--702.
\end{thebibliography}

\newcommand{\noopsort}[1]{} \newcommand{\singleletter}[1]{#1}
\providecommand{\bysame}{\leavevmode\hbox to3em{\hrulefill}\thinspace}
\providecommand{\MR}{\relax\ifhmode\unskip\space\fi MR }
% \MRhref is called by the amsart/book/proc definition of \MR.
\providecommand{\MRhref}[2]{%
  \href{http://www.ams.org/mathscinet-getitem?mr=#1}{#2}
}
\providecommand{\href}[2]{#2}

\end{document}